\documentclass[11pt]{article}

\usepackage[margin=3.5cm]{geometry}

\usepackage{color}
\usepackage[round]{natbib}
\usepackage{mathrsfs}
\usepackage{stmaryrd}
\usepackage{multicol}
\usepackage{tikz, mathtools}

\usetikzlibrary{arrows}
\usepackage{graphicx}

\usepackage{amsmath, amsfonts, amsthm, amssymb, bbm, bm}
\usepackage{mathrsfs}
\usepackage{enumerate}
\usepackage{tabularx}
\usepackage{multicol}
\usepackage{multirow}



\newcommand{\reals}{\mathbb{R}}

\newcommand{\G}{\mathcal{G}}
\newcommand{\X}{\mathfrak{X}}

\DeclareMathOperator{\pa}{pa}
\DeclareMathOperator{\ch}{ch}

\DeclareMathOperator{\an}{an}

\DeclareMathOperator{\sterile}{sterile}
\DeclareMathOperator{\dis}{dis}

\DeclareMathOperator{\pre}{pre}






\newcommand\indep{\protect\mathpalette{\protect\independenT}{\perp}}
\def\independenT#1#2{\mathrel{\rlap{$#1#2$}\mkern2mu{#1#2}}}

\theoremstyle{plain}
\newtheorem{lem}{Lemma}[section]
\newtheorem{thm}[lem]{Theorem}
\newtheorem{prop}[lem]{Proposition}
\newtheorem{cor}[lem]{Corollary}

\theoremstyle{definition}
\newtheorem{dfn}[lem]{Definition}
\newtheorem{rmk}[lem]{Remark}
\newtheorem{exm}[lem]{Example}

\newcommand{\benum}{\begin{enumerate}}
\newcommand{\eenum}{\end{enumerate}}

\newcommand{\bitem}{\begin{itemize}}
\newcommand{\eitem}{\end{itemize}}

\newcommand{\barr}{\begin{array}}
\newcommand{\earr}{\end{array}}

\newcommand{\bmat}{\begin{pmatrix}}
\newcommand{\emat}{\end{pmatrix}}

\newcommand{\blist}{\renewcommand{\labelenumi}{\textbf{\arabic{enumi}}.} \begin{enumerate}}
\newcommand{\elist}{\end{enumerate} \renewcommand{\labelenumi}{\arabic{enumi}.}}

\def\bal#1\eal{\begin{align*}#1\end{align*}}


\setlength{\parindent}{0pt}
\setlength{\parskip}{6pt}

\setlength{\parindent}{6pt}
\setlength{\parskip}{0pt}

\newcommand{\partn}[1]{\llbracket #1 \rrbracket}

\tikzstyle{rv}=[circle, draw, thick, minimum size=7mm, inner sep=.5mm]
\tikzstyle{fv}=[rectangle, draw, thick, minimum size=7mm, inner sep=.5mm]
\tikzstyle{da}=[->, very thick, color=blue]
\tikzstyle{ba}=[<->, very thick, color=red]


\newcommand{\RF}{\mathcal{M}_{\textit{rf}}}
\newcommand{\PA}{\mathcal{M}_p}
\newcommand{\fd}{\mathfrak{d}}
\newcommand{\fm}{\mathfrak{m}}

\DeclareMathOperator{\rh}{rh}
\DeclareMathOperator{\Do}{do}

\title{Smooth, identifiable supermodels of\\
discrete DAG models with latent variables}

\author{ {\bf Robin J.~Evans
} \\  Department of Statistics \\  University of Oxford\\ \tt{evans@stats.ox.ac.uk}\\
\and {\bf Thomas S.~Richardson}  \\ Department of Statistics \\ University of Washington\\
\tt{thomasr@u.washington.edu}\\
}

\begin{document}

\maketitle

\begin{abstract}
  We provide a parameterization of the discrete nested Markov model,
  which is a supermodel that approximates DAG models (Bayesian network 
  models) with latent variables.  Such models are widely used in causal
  inference and machine learning.  We explicitly evaluate their
  dimension, show that they are curved exponential
  families of distributions, and fit them to data.  The
  parameterization avoids the irregularities and unidentifiability of
  latent variable models.  The parameters used are all fully
  identifiable and causally-interpretable quantities.
\end{abstract}

\section{Introduction}

Directed acyclic graph (DAG) models, also known as Bayesian networks,
are a widely used class of multivariate models in probabilistic
reasoning, machine learning and causal inference \citep{bishop:07,
  darwiche:09, pearl:09}.  The inclusion of latent variables within
Bayesian network models can greatly increase their flexibility, and
also account for unobserved confounding; however, latent variable
models are typically non-regular, their dimension can be hard to calculate, 
and they generally do not have fully 
identifiable parameterizations.  In this paper we will present an
alternative approach which overcomes these difficulties, and does not
require any parametric assumptions to be made about the latent
variables.

\begin{exm} \label{exm:verma} Suppose we are interested in the
  relationship between family income during childhood $X$, an individual's 
  education level $E$, their military service $M$, and 
  their later income $Y$.  We might
  propose the model shown in Figure \ref{fig:verma}(a), which includes
  a hidden variable $U$ representing motivation or intelligence.  Let
  the four observed variables be binary, but make no assumption
  about $U$.

  One can check using Pearl's d-separation criterion \citep{pearl:09}
  that $M \indep X \,|\, E$ under this model, i.e.\ there is no
  relationship between military service and family income after
  controlling for level of education; this places two independent
  constraints on the variables' joint distribution $p(x, e, m, y)$ (one for each level
  of $E$).  In addition, let
  $q_{EY}(e, y \,|\, x, m) \equiv p(e \,|\, x) \cdot p(y \,|\, x, m,
  e)$; then the quantity
  \begin{align}
q_{EY}(y \,|\, x, m) &\equiv \sum_e q_{EY}(e, y \,|\, x, m) \nonumber \\
&= \sum_{e} p(e \,|\, x) \cdot p(y \,|\, x, m, e) \label{eqn:cause}
\end{align}
does not depend upon $x$ \citep{robins:86}; a short proof of this
is given in Appendix \ref{sec:dorm}.  
If the graph is
interpreted causally then
$q_{EY}(y \,|\, x, m) = p(y \,|\, \Do(x, m))$, i.e.\ it is the
distribution of $Y$ in an experiment that externally sets
$\{X=x, M = m\}$.  Note that generally
$q_{EY}(y \,|\, x, m) \neq p(y \,|\, x, m)$.

\begin{figure}
\begin{center}
\begin{tikzpicture}
[>=stealth, node distance=15mm]
\pgfsetarrows{latex-latex};
\begin{scope}
 \node[rv] (1) {$X$};
 \node[rv, right of=1] (2) {$E$};
 \node[rv, right of=2] (3) {$M$};
 \node[rv, right of=3] (4) {$Y$};
 \draw[da] (1) -- (2);
 \draw[da] (2) -- (3);
 \draw[da] (3) -- (4);
\draw[ba] (2.40) .. controls +(40:1) and +(140:1) .. (4.140);
\node[below of=2, xshift=7.5mm, yshift=2mm] {(b)};
\end{scope}
\begin{scope}[xshift=-7cm]
 \node[rv] (1) {$X$};
 \node[rv, right of=1] (2) {$E$};
 \node[rv, right of=2] (3) {$M$};
 \node[rv, right of=3] (4) {$Y$};
 \node[rv, above of=3, yshift=-5mm, color=red] (5) {$U$};
 \draw[da] (1) -- (2);
 \draw[da] (2) -- (3);
 \draw[da] (3) -- (4);
 \draw[da, color=red] (5) -- (4);
 \draw[da, color=red] (5) -- (2);
\node[below of=2, xshift=7.5mm, yshift=2mm] {(a)};
\end{scope}
\end{tikzpicture}
\caption{(a) A directed acyclic graph with the latent variable $U$; (b)
  a (conditional) acyclic directed mixed graph (the \emph{Verma
    graph}) representing the observed distribution in (a).}
\label{fig:verma}
\end{center}
\end{figure}
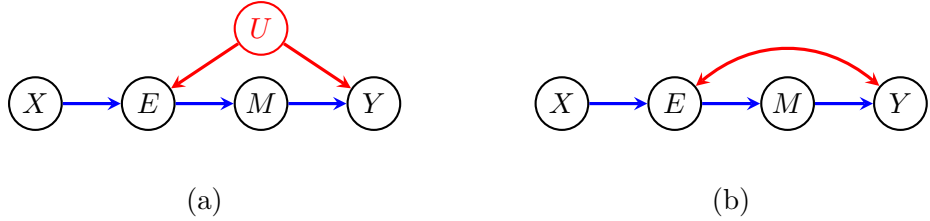

The restriction that (\ref{eqn:cause}) does not depend on $x$ corresponds to two further independent
constraints on $p$, one for each level of $m$.  The set of
distributions that satisfy all four constraints is the \emph{nested
  Markov model} associated with the graph(s) in Figure
\ref{fig:verma}; the number of free parameters is $15 - 2 - 2 = 11$.

The distributions in the model all factorize as
\begin{align*}
p(x, e, m, y) &= p(x) \cdot p(m \,|\, e) \cdot q_{EY}(e, y \,|\, x, m),
\end{align*}
and each of the three factors can be parameterized separately.  The
model can therefore be described using the following 11 free
parameters:
\begin{align*}
& p(x = 0),  && p(m = 0 \,|\, e),
&&  q_{EY}(e = 0 \,|\, x),\\
&&& q_{EY}(y = 0 \,|\, m), && q_{EY}(e = y = 0 \,|\, x, m). 
\end{align*}
which---if we interpret the model as a causal one---are respectively
the quantities
\begin{align*}
& P(X = 0),  && P(M = 0 \,|\, \Do(E=e)),
&&  P(E = 0 \,|\, \Do(X=x)),\\
&&& P(Y = 0 \,|\, \Do(M=m)), && P(E=0, Y=0 \,|\, \Do(X=x, M=m)). 
\end{align*}
The map from the set of positive probability distributions that satisfy the 4
constraints to these 11 parameters is smooth and bijective, and the
parameters are fully identifiable.  It follows that the model is a
curved exponential family of distributions, and that it can be fitted
using standard numerical methods.
\end{exm}

An alternative modelling approach would be to include a latent variable $U$
explicitly in the model, but this leads to some parameters being
unidentifiable.  For example, with a binary $U$ the model implied by
Figure \ref{fig:verma}(a) has 12 parameters.  We know that the true
marginal distribution has at most dimension 11, so at least one of
these 12 parameters is unidentifiable.  Even though the model is not identified, 
this latent variable model is still `too small', in the sense 
that the model over the observed margin only has dimension 10, whereas
dimension 11 can be obtained if $U$ is allowed to have more than two states.  
As $U$ is not observed, it is undesirable to make specific assumptions about $U$'s
state-space because one may unwittingly impose restrictions on the observable 
distribution.  Further, latent variable models are not statistically
regular, so standard statistical theory for likelihood ratio tests and
asymptotic normality of parameter estimates does not apply
\citep[see, e.g.][]{mond:03, drton:09a}.  

\subsection{Other Work and this Paper's Contribution}

Models of conditional independence associated with margins of 
DAG models (we refer to these as `ordinary Markov models') 
have been studied by \citet{richardson:03}; see also \citet{wermuth:11}.  These
models were parameterized and shown to be smooth by 
\citet{evans:14}.  
Other approaches using probit models \citep{silva:09} and cumulative
distribution networks \citep{huang:08, silva:11} are more parsimonious 
than ordinary Markov models, but impose additional constraints due to 
their parametric structure.

None of the models mentioned in the previous paragraph can account
for constraints of the kind in (\ref{eqn:cause}), which were first identified by
\citet{robins:86} and separately by \citet{verma:91}.  Such constraints
are attractive because they allow finer distinctions between different 
causal models from purely observational data: for example, going by conditional
independence alone the graph in Figure \ref{fig:verma}(b) is Markov equivalent 
to the DAGs in Figure \ref{fig:verma_eq}, and these causal models are therefore 
observationally indistinguishable; however the DAGs do not imply the Verma
constraint (\ref{eqn:cause}), so under the nested Markov model one \emph{can} 
distinguish between these models.

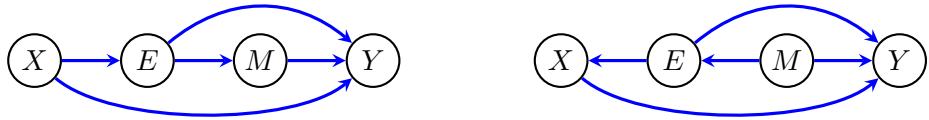
\begin{figure}
\begin{center}
\begin{tikzpicture}
[>=stealth, node distance=15mm]
\pgfsetarrows{latex-latex};
\begin{scope}
 \node[rv] (1) {$X$};
 \node[rv, right of=1] (2) {$E$};
 \node[rv, right of=2] (3) {$M$};
 \node[rv, right of=3] (4) {$Y$};
 \draw[da] (1) -- (2);
 \draw[da] (2) -- (3);
 \draw[da] (3) -- (4);
\draw[da] (2.40) .. controls +(40:1) and +(140:1) .. (4.140);
\draw[da] (1.320) .. controls +(320:1) and +(220:1) .. (4.220);
\end{scope}
\begin{scope}[xshift=7cm]
 \node[rv] (1) {$X$};
 \node[rv, right of=1] (2) {$E$};
 \node[rv, right of=2] (3) {$M$};
 \node[rv, right of=3] (4) {$Y$};
 \draw[da] (2) -- (1);
 \draw[da] (3) -- (2);
 \draw[da] (3) -- (4);
\draw[da] (2.40) .. controls +(40:1) and +(140:1) .. (4.140);
\draw[da] (1.320) .. controls +(320:1) and +(220:1) .. (4.220);
\end{scope}
\end{tikzpicture}
\caption{DAGs that represent the same conditional 
independence model as Figure \ref{fig:verma}(b), but 
which do not imply the Verma constraint.}
\label{fig:verma_eq}
\end{center}
\end{figure}

An algorithm
for finding such constraints was given by \citet{tian:02a}, and 
developed into a fully nonparametric statistical model (the nested Markov 
model) by \citet{shpitser:17}.  
In this paper we provide a smooth, statistically regular 
and fully identifiable parameterization of the discrete version 
of nested Markov models.  As a result, discrete nested Markov
models are shown to be curved exponential families of distributions of
known dimension.  All the parameters we derive are interpretable as
straightforward causal quantities.
\citet{evans:complete} shows that the discrete nested Markov model 
that we describe here is the best possible algebraic approximation 
to DAG models with latent
variables, in the sense that the models have the same dimension over
the observed variables.  An earlier review paper \citep{shpitser:14}
mentions the parameterization given here, but no proofs are provided. 

The conditional independence constraints we consider here 
include the constraints described in \citep{tian:02a}. They are also
a special case of the \emph{dormant independences}.
However, not all dormant independences lead to constraints
on the observed distribution -- some impose restrictions (solely) on
intervention distributions; see \citep{shpitser:14}.   A complete algorithm for
generating dormant constraints is given in \citep{shpitser08dormant}.

The remainder of the paper is organized as follows.  In Section
\ref{sec:cadmgs} we introduce Conditional Acyclic Directed Mixed
Graphs, the class of graphs we use to represent our models; those
models are formally introduced in Section \ref{sec:nested}.  Some
graphical theory is given in Section \ref{sec:part}, before the main
results in Section \ref{sec:main}.  Section \ref{sec:exm}
applies the method to data from a panel study.

\section{Conditional acyclic directed mixed graphs} \label{sec:cadmgs}

A directed acyclic graph (DAG) contains vertices representing random
variables, and edges (arrows) that imply some structure on the joint
probability distribution.  A DAG with latent vertices can be
transformed into an acyclic directed mixed graph (ADMG) over just its
observed vertices via an operation called \emph{latent projection}
\citep{pearl:92}.  In the simplest case this just
involves replacing latent variables with bidirected edges
($\leftrightarrow$), as illustrated by the transformation from Figure
\ref{fig:verma}(a) to (b); the transformed graph represents the
marginal distribution over the observed random variables $X_V$.

For technical reasons we work with a slightly larger class of graphs,
called \emph{conditional} acyclic directed mixed graphs (CADMGs).
These have two types of vertex, fixed ($W$) and random ($V$), and are
used to represent the structure of a set of distributions for $X_V$ 
indexed by possible values of $X_W$.

\begin{figure}
\begin{center}
\begin{tikzpicture}
[>=stealth, node distance=20mm]
\pgfsetarrows{latex-latex};
\begin{scope}
 \node[fv] (1) {1};
 \node[rv, right of=1] (2) {2};
 \node[rv, right of=2] (3) {3};
 \node[rv, right of=3] (4) {4};
 \node[rv, above of=3, yshift=-6mm] (5) {5};
 \draw[da] (1) -- (2);
 \draw[da] (2) -- (3);
 \draw[da] (3) -- (4);
 \draw[ba] (4) -- (5);
 \draw[ba] (2) -- (5);
\end{scope}
\end{tikzpicture}
\caption{A conditional acyclic directed mixed graph $\mathcal{L}$, with
  random vertices $V = \{2,3,4,5\}$ and fixed vertices $W = \{1\}$.}
\label{fig:graph2}
\end{center}
\end{figure}
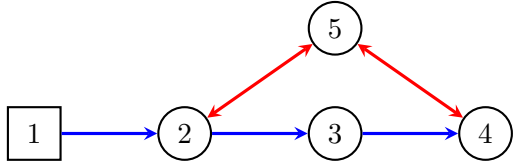

\begin{dfn}
  A \emph{conditional acyclic directed mixed graph} (CADMG) $\G$ is a
  quadruple $(V,W,\mathcal{E},\mathcal{B})$.  There are two disjoint
  sets of vertices: \emph{random}, $V$, and \emph{fixed}, $W$.  The
  \emph{directed edges} $\mathcal{E} \subseteq (V \cup W) \times V$
  are ordered pairs of vertices; if $(a,b) \in \mathcal{E}$ we write
  $a \rightarrow b$.  Loops $a \rightarrow a$ and directed cycles
  $a \rightarrow \cdots \rightarrow a$ are not allowed (hence
  `acyclic').

  The \emph{bidirected edges}, $\mathcal{B}$, are unordered pairs of
  distinct random vertices, and if $\{a,b\} \in \mathcal{B}$ we write
  $a \leftrightarrow b$. 
\end{dfn}

For convenience, throughout this paper we will only consider CADMGs in
which for every fixed vertex $w$ there is at least one edge $w\rightarrow v$.
(Note that it follows from the definition of a CADMG that $v$ will be random.)

These graphical concepts are most easily understood by example: see
the CADMG in Figure \ref{fig:graph2}.  
We depict random vertices with round nodes, and fixed vertices with
square nodes.  CADMGs are not generally simple
graphs, because it is possible to have up to two edges between each
pair of vertices in $V$ (one directed and one bidirected); see Figure 
\ref{fig:iv} for two examples.  CADMGs are
a slight generalization of ADMGs \citep{richardson:03}, which
correspond to the special case $W = \emptyset$.  Note that no 
arrowheads can be adjacent to any fixed vertex: so neither
$a \rightarrow w$ nor $a \leftrightarrow w$ is allowed for any
$w \in W$.  This reflects the fact that fixed vertices cannot depend
on other variables, observed or unobserved, but that random
vertices may depend upon fixed ones.  Mathematically, fixed nodes play
a similar role to the `parameter nodes' used by \citet{dawid:02}.

We make use of the following standard familial terminology for directed graphs.

\begin{dfn}  
  If $a \rightarrow b$ we say that $a$ is a \emph{parent} of $b$, and
  $b$ a \emph{child} of $a$.  The set of parents of $b$ is denoted
  $\pa_\G(b)$.  We say that $w$ is an \emph{ancestor} of $v$ if either
  $v=w$ or there is a sequence of directed edges
  $w \rightarrow \cdots \rightarrow v$.  The set of ancestors of $v$
  is denoted $\an_\G(v)$.  These definitions are applied disjunctively
  to sets of vertices so that, for example,
  $\pa_\G(A) \equiv \bigcup_{a \in A} \pa_\G(a)$.  An \emph{ancestral}
  set is one that contains all its own ancestors: $\an_\G(A) = A$.

Note that the definitions of parents, children and
ancestors do not distinguish between random and fixed vertices.
A \emph{random-ancestral} set, $A' \subseteq V$, is a set of random vertices such
that $\an_\G(A') \subseteq A' \cup W$; i.e.\ all the random ancestors of
$A'$ are contained in $A'$ itself.

A set of vertices $B$ is said to be \emph{sterile} if it does not
contain any of its children: $\ch_\G(B) \cap B = \emptyset$.  
The \emph{sterile subset} of a set $C \subseteq V$ is
$\sterile_\G(C) \equiv C \setminus \pa_\G(C)$ (sometimes 
called the set of `sink nodes' in the induced subgraph on ${C}$).
\end{dfn}

\begin{exm}
Consider the CADMG $\mathcal{L}$ in Figure
\ref{fig:graph2}.  The parents of the vertex 3 are
$\pa_{\mathcal{L}}(3) = \{2\}$, and its ancestors are
$\an_{\mathcal{L}}(3) = \{1,2,3\}$; hence $\{1,2,3\}$ is 
ancestral, and $\{2,3\}$ is random-ancestral.  The set $\{2,4,5\}$ is sterile,
but $\{2,3,5\}$ is not.
\end{exm}

\begin{dfn}
  A set of random vertices $B \subseteq V$ is
  \emph{bidirected-connected} if for each $a,b \in B$ there is a
  sequence of edges $a \leftrightarrow \cdots \leftrightarrow b$ with
  all intermediate vertices in $B$.  A maximal bidirected-connected
  set is a \emph{district} of the graph $\G$.  The set of districts is a partition of the 
  random vertices of a graph; the district containing $v \in V$ is
  denoted $\dis_\G(v)$.
\end{dfn}

We draw bidirected edges in red, which makes it easy to identify
districts as the maximal sets connected by red
edges.  In Figure \ref{fig:verma}(b) for example, there are three
districts: $\{X\}$, $\{M\}$, and $\{E,Y\}$.  In Figure \ref{fig:graph2}
there are two: $\{3\}$ and $\{2,4,5\}$.

\subsection{Transformations}

We now introduce two operations that transform CADMGs by removing
vertices: the first separates into districts and the second one forms ancestral subgraphs.
We will use these transformations to define our Markov property (and
thereby our statistical model) in Section \ref{sec:nested}.

\begin{dfn}
  Let $\G$ 
  be a CADMG containing a district $D$.  Define
  $\fd_D(\G)$ to be the CADMG with: the set of random vertices $D$; 
 the set of fixed vertices
  $\pa_\G(D) \setminus D$; 
  the set of bidirected edges whose endpoints are both in $D$ in $\G$;
  the set of  directed edges from $\G$ pointing
  to a vertex in $D$ (including directed edges between vertices in $D$).

  Let $A$ be a random-ancestral set in $\G$.  Define $\fm_A(\G)$ to be
  the graph with the set of random vertices $A$, the set of fixed vertices
  $\pa_\G(A) \setminus A$, and all edges between these vertices that are in $\G$.  Note
  that, since $A$ is random-ancestral, by definition the vertices in
  $\pa_\G(A) \setminus A$ are already fixed vertices in $\G$.

  If a graph $\mathcal{G}'$ can be obtained from $\G$ by iteratively
  applying operations of the form $\fd$ and $\fm$, we say that $\G'$
  is \emph{reachable} from $\G$.
\end{dfn}


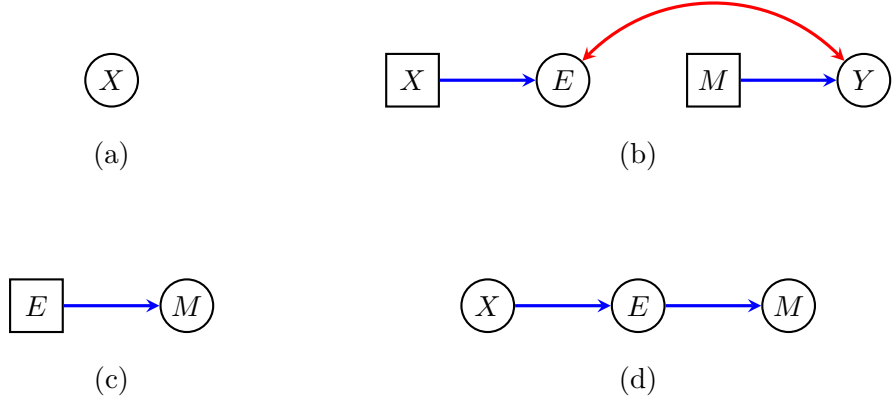
\begin{figure}
 \begin{center}
 \begin{tikzpicture}
 [node distance=20mm, >=stealth]
 \pgfsetarrows{latex-latex};
\begin{scope}[xshift=2cm]
 \node[rv]  (1)              {$X$};
 \node[below of=1, yshift=10mm] {(a)};
\end{scope}
\begin{scope}[yshift=-3cm, xshift=1cm]
 \node[rv, rectangle]  (2)              {$E$};
 \node[rv, right of=2] (3) {$M$};
 \draw[da] (2) -- (3);
 \node[below of=2, yshift=10mm, xshift=10mm] {(c)};
\end{scope}
\begin{scope}[xshift=7cm, yshift=-3cm]
 \node[rv]  (1)              {$X$};
 \node[rv, right of=1]  (2)              {$E$};
 \node[rv, right of=2] (3) {$M$};
 \draw[da] (1) -- (2);
 \draw[da] (2) -- (3);
 \node[below of=2, yshift=10mm] {(d)};
\end{scope}
\begin{scope}[xshift=6cm]
 \node[rv, rectangle]  (1)              {$X$};
 \node[rv, right of=1] (2) {$E$};
 \node[rv, right of=2, rectangle] (3) {$M$};
 \node[rv, right of=3] (4) {$Y$};
 \draw[da] (1) -- (2);
 \draw[da] (3) -- (4);
 \draw[ba] (2.45) .. controls +(1,1) and +(-1,1) .. (4.135);
  \node[below of=2, xshift=10mm, yshift=10mm] {(b)};
\end{scope}
 \end{tikzpicture}
 \caption{Reachable subgraphs of the graph in
   Figure \ref{fig:verma}(b).  Graphs in (a), (b) and (c) correspond to 
   factorization into the districts $\{X\}$,
   $\{E,Y\}$ and $\{M\}$ respectively.  Graph (d) corresponds to 
   marginalizing the childless node $Y$.
}
 \label{fig:subs}
 \end{center}
\end{figure}

\begin{figure}
\begin{center}
\begin{tikzpicture}
[>=stealth, node distance=20mm]
\pgfsetarrows{latex-latex};
\begin{scope}
 \node[fv] (1) {1};
 \node[rv, right of=1] (2) {2};
 \node[fv, right of=2] (3) {3};
 \node[rv, right of=3] (4) {4};
 \node[rv, above of=3, yshift=-6mm] (5) {5};
 \draw[da] (1) -- (2);
 \draw[da] (3) -- (4);
 \draw[ba] (4) -- (5);
 \draw[ba] (2) -- (5);
\node[below of=2, xshift=10mm, yshift=10mm] {(a)};
\end{scope}
\begin{scope}[yshift=-3cm]
 \node[fv] (1) {1};
 \node[rv, right of=1] (2) {2};
 \node[fv, right of=2] (3) {3};
 \node[rv, right of=3] (4) {4};
 \draw[da] (1) -- (2);
 \draw[da] (3) -- (4);
\node[below of=2, xshift=10mm, yshift=10mm] {(b)};
\end{scope}
\begin{scope}[yshift=-7cm]
  \node (1) {};
  \node[right of=1] (2) {};
 \node[fv, right of=2] (3) {3};
 \node[rv, right of=3] (4) {4};
 \node[rv, above of=3, yshift=-6mm] (5) {5};
 \draw[da] (3) -- (4);
 \draw[ba] (4) -- (5);
\node[below of=2, xshift=10mm, yshift=10mm] {(c)};
\end{scope}
\end{tikzpicture}
\caption{Three CADMGs reachable from the graph in Figure \ref{fig:graph2}.}
\label{fig:reached}
\end{center}
\end{figure}

\begin{exm}
  The graph in Figure \ref{fig:verma}(b) contains the districts
  $\{X\}$, $\{E,Y\}$ and $\{M\}$.  The corresponding graphs
  $\fd_{D}(\G)$ are given in Figure \ref{fig:subs}(a), (b) and
  (c) respectively.  The sets $\{X, E, M\}$,  $\{X, E\}$ and $\{X\}$ are ancestral in $\G$, and the
  graphs $\fm_{\{X\}}(\G)$ and $\fm_{\{X, E, M\}}(\G)$ are shown in Figures
  \ref{fig:subs}(a) and (d) respectively.
\end{exm}

\begin{exm}
  The graph in Figure \ref{fig:graph2} contains the district
  $\{2,4,5\}$, and $\fd_{\{2,4,5\}}(\mathcal{L})$ gives us the graph
  in Figure \ref{fig:reached}(a).  The sets $\{2,4\}$ and $\{4,5\}$
  are both random-ancestral in $\fd_{\{2,4,5\}}(\mathcal{L})$, so we can
  apply either $\fm_{\{2,4\}}$ or $\fm_{\{4,5\}}$ to obtain the CADMGs
  in Figures \ref{fig:reached}(b) and (c) respectively.
\end{exm}

As we will see in the next section, both of these graphical operations
correspond to an operation on a probability distribution we associate
with the graph: $\fm_A$ to marginalization, and $\fd_D$ to a
factorization.  The `fixing' operation described in
\citet{shpitser:17} unifies and generalizes $\fm$ and $\fd$, but the
statistical model we will describe is ultimately the same.  For the
purposes of defining a parameterization it is more convenient to use
the formulation given here.

Note that if we start with a graph $\G$ in which all the fixed
vertices $w \in W$ have at least one child, then this is also true of
the graph obtained after applying either $\fm_A$ or $\fd_D$.  

It is important to note that sets may become districts or random
ancestral sets after several iterations of $\fm$ and $\fd$.  For
example, $\{2,4\}$ is not random-ancestral in $\mathcal{L}$, but it is
in $\fd_{\{2,4,5\}}(\mathcal{L})$.  Similarly $\{4\}$ is not a
district in $\fd_{\{2,4,5\}}(\mathcal{L})$, but it is in
$\fm_{\{2,4\}}(\fd_{\{2,4,5\}}(\mathcal{L}))$; see Figure
\ref{fig:reached}(b).

We now give a characterization of what reachable
graphs look like.

\begin{dfn} \label{dfn:reach}
Let $\G$ be a CADMG with random vertex set $V$.  Given 
$C \subseteq V$ the graph $\G[C]$ is defined to be the CADMG with 
the set of random vertices $C$, fixed vertices $\pa_\G(C) \setminus C$,
those bidirected edges in $\G$ with both endpoints in
  $C$, and those directed edges that are directed from $C \cup
  \pa_\G(C)$ to $C$.
\end{dfn}

In other words, $\G[C]$ is the subgraph containing
precisely the edges whose arrowheads are all in $C$.   For example,
if $\G$ is the graph in Figure \ref{fig:verma}(b), then Figure \ref{fig:subs}(a)--(d)
corresponds to $\G[\{X\}]$, $\G[\{E,Y\}]$, $\G[\{M\}]$ and $\G[\{X,E,M\}]$ 
respectively. 

\begin{lem} \label{lem:welldef} Suppose that the graph $\G'$ is
  reachable from $\G$ and has set of random vertices $C$.  Then $\G' = \G[C]$.
\end{lem}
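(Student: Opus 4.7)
The plan is to prove this by induction on the number $k$ of applications of $\fm$ and $\fd$ used to obtain $\G'$ from $\G$. For the base case $k = 0$ we have $\G' = \G$ with random vertex set $V$, and I would verify directly from Definition \ref{dfn:reach} that $\G = \G[V]$: because every fixed vertex of $\G$ has at least one child (our standing assumption), $\pa_\G(V) \setminus V = W$, all bidirected edges are between random vertices, and every directed edge of $\G$ already terminates in $V$, so all edges of $\G$ are recovered.

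For the inductive step, assume $\G''$ is reached from $\G$ in $k$ steps with random vertex set $C''$, and that $\G'' = \G[C'']$. Apply one further operation -- either $\fd_D$ for a district $D$ of $\G''$, or $\fm_A$ for a random-ancestral set $A$ of $\G''$ -- and let $C$ denote the random vertex set of the resulting graph, so that $C = D$ or $C = A$, and in either case $C \subseteq C''$. The key observation is that, for any subset $C' \subseteq C''$, the edges of $\G[C'']$ with an arrowhead in $C'$ coincide with the edges of $\G$ with an arrowhead in $C'$, since $\G[C'']$ is defined to contain precisely the $\G$-edges whose arrowheads lie in $C''$. Applied with $C' = C$ this yields the two facts I need: $\pa_{\G''}(C) = \pa_\G(C)$, and moreover the bidirected edges of $\G''$ within $C$ and the directed edges of $\G''$ with arrowhead in $C$ are exactly the corresponding edges of $\G$.

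In the $\fd_D$ case the conclusion is then immediate: the resulting graph has random vertices $C = D$, fixed vertices $\pa_{\G''}(D) \setminus D = \pa_\G(C) \setminus C$, and precisely the bidirected edges of $\G$ inside $C$ together with the directed edges of $\G$ with arrowhead in $C$, which is $\G[C]$. The $\fm_A$ case needs one extra remark: because $A$ is random-ancestral in $\G''$, the set $\pa_{\G''}(A) \setminus A$ consists of fixed vertices, and since no arrowhead may be incident to a fixed vertex, the edges of $\G''$ between vertices of $A \cup (\pa_{\G''}(A) \setminus A)$ are exactly those with an arrowhead in $A$; this again matches $\G[C]$ by the observation above. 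The main obstacle is purely bookkeeping -- one must track which vertices become fixed and appeal to the no-arrowhead-at-fixed-vertex rule to rule out spurious edges -- but no deeper graph-theoretic argument is needed once the arrowhead-preservation observation is in hand.
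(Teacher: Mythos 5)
Your proof is correct and follows essentially the same route as the paper's (much terser) argument: establish $\G = \G[V]$ from the standing assumption that every fixed vertex has a child, then check that each application of $\fd$ or $\fm$ preserves exactly the edges and fixed vertices prescribed by Definition \ref{dfn:reach}. Your write-up simply makes explicit the induction and the arrowhead-preservation bookkeeping that the paper declares ``clear from the definitions.''
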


\begin{proof}
  Since we assume all fixed vertices have at least one child, then
  $\G = \G[V]$.  In addition it is clear from the definitions of $\fd$
  and $\fm$ that precisely the edges and fixed vertices mentioned are
  preserved at each step.
\end{proof}

In the rest of this paper we will only refer to $\G[C]$ if $C$ is a 
reachable set, though Definition \ref{dfn:reach} in principle applies
to any $C \subseteq V$. 
Unfortunately there is generally no simple way of characterizing which 
sets $C$ correspond to reachable subgraphs without iteratively 
applying $\fd$ and $\fm$ as defined above.  
If a set $A$ is random-ancestral, then clearly $\G[A]$ is reachable
just by applying $\fm$.
Note that \citet{shpitser:17} use a slightly
more general definition of reachable sets.  


\section{Nested Markov Property} \label{sec:nested}

Graphical models relate the structure of a graph to a collection of
joint probability distributions over a set of random variables.  We will 
work with the nested Markov property, which 
relates a (C)ADMG and each of its reachable subgraphs to
a collection of probability distributions over random vertices, indexed by fixed vertices.

Suppose we are interested in random variables $X_v$ taking values in a
finite discrete set $\X_v$.  For a set of vertices $C$ let
$\X_{C} \equiv \times_{v \in C}\, \X_v$.  A \emph{probability 
kernel for $V$ given $W$} (or simply a kernel) is a
function $p_{V|W} : \X_V \times \X_W \rightarrow [0,1]$ such that for
each $x_W \in \X_W$,
\begin{align*}
\sum_{x_V \in \X_V} p_{V|W}(x_V \,|\, x_W) = 1.
\end{align*}
In other words, a kernel behaves like a conditional probability
distribution for $X_V$ given $X_W$.  We use the word `kernel' to
emphasize that some of the conditional distributions we obtain are not
equal to the usual conditional distribution obtained from elementary
definitions, but instead correspond to certain interventional
quantities.  

In what follows, $\dot\cup$ is used to denote a union of disjoint
sets.

\begin{dfn}
Let $p_{V|W}$ be a kernel, and let $A \dot\cup B \dot\cup C = V$.  The \emph{marginal
kernel} over $A, B \,|\, W$ is defined to be:
\begin{align*}
p_{AB|W}(x_A, x_B \,|\, x_W) \equiv \sum_{x_{C}} p_{V|W}(x_V \,|\, x_W).
\end{align*}
It is easy to check that $p_{AB|W}$ is also a kernel.
A (version of the) \emph{conditional kernel} of $A|B,W$ is any kernel $p_{A|BW}$
satisfying
\begin{align*}
p_{A|BW}(x_A \,|\, x_B, x_W) \cdot p_{B|W}(x_B \,|\, x_W)  \equiv\, p_{AB|W}(x_A, x_B \,|\, x_W).
\end{align*}
\end{dfn}

This is uniquely defined precisely for $x_B, x_W$ such that
$p_{B|W}(x_B \,|\, x_W) > 0$.

\begin{rmk} \label{rmk:equiv}
  Note that, for convenience, if some of the fixed variables
  $W^* \subseteq W$ in a kernel $p_{V|W}$ are entirely irrelevant,
  (i.e.\ if the functions $p_{V|W}(\cdot \,|\, \cdot, y_{W^*})$ are
  identical for all $y_{W^*} \in \X_{W^*}$) we will describe it interchangably
  as a kernel of $V$ given $W$, and as a kernel of $V$ given
  $W \setminus W^*$, since in this case these objects are isomorphic:
$p_{V|W} =   p_{V|W \setminus W^*}$.
  %
\end{rmk}

We are now in a position to define the nested model.  The definition is recursive, and
works by reference to the model applied iteratively to smaller and smaller graphs.
The model is introduced in \citet{shpitser:17}, and is based on the constraint finding
algorithm of \citet{tian:02a}, which follows a similar recursive structure.

\begin{dfn} \label{dfn:rf}
  Let $\G$ be a CADMG and $p_{V|W}$ a
  probability kernel.  Say that $p_{V|W}$
  \emph{recursively factorizes} according to $\G$, and write $p_{V|W}
  \in \RF(\G)$ if either $|V| = 1$, or both:
\benum[(i)]
\item if $\G$ has districts $D_1, \ldots, D_k$, $k \geq 1$ then 
\begin{equation}
p_{V|W}(x_V\,|\,x_W) =
  \prod_i r_i(x_{D_i}\,|\, x_{\pa(D_i)\setminus D_i}) \label{eqn:rec-fact}
\end{equation}
and where, if $k \geq 2$, each $r_i$ recursively factorizes according to $\G[D_i] = \fd_{D_i}(\G)$; and
\item for each ancestral set $A$ with $V \setminus A \neq \emptyset$, 
the marginal distribution
\[
p_{A \cap V|W}(x_{A \cap V} \,|\, x_{W}) = \sum_{x_{V\setminus A}} p_{V|W}(x_V \,|\, x_W)
\]
does not depend upon $x_{W \setminus A}$ (so we denote it by $p_{A \cap V|A \cap W}$
in line with Remark \ref{rmk:equiv}), and this
recursively factorizes according to $\G[V \cap A] = \fm_{V\cap A}(\G)$.
\eenum
\end{dfn}

Given a graph $\G$, we shall refer to $\RF(\G)$ as the \emph{nested model}
associated with $\G$, and say that distributions in that set satisfy the \emph{nested
Markov property} with respect to $\G$.  There are other, equivalent definitions: 
see \citet{shpitser:17}.

\begin{rmk}
  It is important to note that, in terms of the factors $r_i$ whose existence is implied
  by condition (i), the definition of recursive
  factorization `starts from scratch' each time we perform the
  recursion.  For example, we make no claim (yet) about the connection
  between a factor $r_i$ obtained from (i) and any such factors which
  arise after first applying (ii) and then later (i): see Example \ref{exm:order}.

  In the base case $V = \{v\}$ the definition places no restriction on the 
  distribution of $X_v$ given $X_W$.
	The observed distribution
	obtained from a directed acyclic graph model with latent variables
	will satisfy conditions (i) and (ii) with respect to the ADMG
	that is the latent projection of that DAG
  \citep{tian:02, tian:02a}.  The models defined by the Markov
  properties for ADMGs introduced by \citet{richardson:03} and
  parameterized by \citet{evans:13,evans:14} can be defined by
  replacing (i) with the weaker requirement:
  \begin{enumerate}[(i')]
\item $\G$ has districts $D_1, \ldots, D_k, k \geq 1$, and $p_{V|W} =
  \prod_i r_i$ where each $r_i$ is a kernel for $D_i$ given 
  $\pa_\G(D_i) \setminus D_i$.
  \end{enumerate}
  In other words, although the distribution must satisfy the
  ancestrality condition (ii) and then factorize, no further conditions are
  imposed on those factors: they are not required to obey any
  additional constraints implied by the graph $\G[D_i]$.  This leads to a model defined
  entirely by conditional independence relations on the original joint
  distribution $p_{V|W}$.
  
  As a consequence of this, the m-separation criterion for ordinary Markov models 
  \citep[as well as the other Markov properties described by][]{richardson:03} 
  can be applied to correctly to the initial ADMG $\G$ to derive conditional 
  independences in $p(x_V)$; however,
  it does not completely describe the nested model.  
\end{rmk}

\begin{exm} \label{exm:order}
  Consider the CADMG in Figure \ref{fig:verma}(b).  Criterion (i) of
  recursive factorization requires that
\begin{align*}
p(x, e, m, y) = r_{X}(x) \cdot r_{EY}(e, y \,|\, x, m) \cdot r_{M}(m \,|\, e) 
\end{align*}
for distributions $r_{X}$,  $r_{EY}$ and $r_{M}$ which recursively
factorize according to the graphs in Figure \ref{fig:subs}(a), (b) and
(c) respectively.  

On the other hand, if we apply condition (ii) to the childless node $Y$
we see that the margin $p(x, e, m)$ must satisfy recursive factorization 
with respect to the DAG in Figure \ref{fig:subs}(d), so
\begin{align*}
p(x, m, e) = \tilde{r}_{X}(x) \cdot \tilde{r}_{E}(e \,|\, x) \cdot \tilde{r}_{M}(m \,|\, e)
\end{align*}
for some kernels $\tilde{r}_{X}$, $\tilde{r}_{M}$ and
$\tilde{r}_E$.  This factorization implies the conditional independence
$X \indep M \,|\, E$, which can also be deduced using m-separation.  
We add a tilde to the kernels to emphasise that the
\emph{definition} starts afresh at each iteration, and makes no claim of any
relationship between this factorization and the factorization of
$p = r_X r_{EY} r_M$.  However, it is not hard to verify that in fact
\begin{align*}
r_{X}(x) &= \tilde{r}_{X}(x) = p(x),\\
r_{M}(m \,|\, e) &= \tilde{r}_{M}(m \,|\, e) = p(m \,|\, e),\\
\sum_y r_{EY}(e,y \,|\, x, m) &= \tilde{r}_E(e \,|\, x) = p(e \,|\, x).
\end{align*} 
In fact it will follow from Theorem \ref{thm:main} that, in general,
kernels such as $r_X$ and $\tilde{r}_X$ that have the same random vertex 
set but are derived in different ways are equal under the model. 
Note that
\begin{align*}
r_{EY}(e, y \,|\, x, m) &= p(e \,|\, x) \cdot p(y \,|\, x, m, e)\\
&\neq p(e \,|\, x, m) \cdot p(y \,|\, x, m, e)\\
&= p(e, y \,|\, x, m),
\end{align*}
and so $r_{EY}$ is \emph{not} the usual conditional distribution
of $E, Y$ given $X, M$.
\end{exm}

\subsection{Properties of the Recursive Kernels}

Here we show that the kernels $r_i$ from (\ref{eqn:rec-fact}) 
in Definition \ref{dfn:rf} are products of conditional 
distributions derived from $p_{V|W}$ at the
current level of the recursion, and that they are uniquely defined up to
versions of those conditional distributions.

A \emph{topological ordering} of the random vertices of a CADMG is a
total ordering $<$ on $V$ such that every vertex precedes its
children.  We denote by $\pre_{<}(v)$ the set of (random) vertices
which precede $v$ under $<$.

The following proposition shows that the factors in the definition of
recursive factorization are unique up to versions of conditional
distributions.


\begin{prop} \label{prop:gform} Let $\G$ be a CADMG with districts
  $D_1, \ldots, D_k$, and let $<$ be any topological ordering of $V$.
  Let $p_{V|W} = \prod_i r_i$, where each $r_i$ recursively factorizes
with respect to $\G[D_i]$.  Then 
\begin{align}
r_i(x_{D_i} \,|\, x_{\pa_\G(D_i) \setminus D_i}) = \prod_{v \in D_i} p_{v|\pre_<(v) \cup W}(x_v \,|\, x_{\pre_<(v)} , x_W),
\label{eqn:gform}
\end{align}
where $p_{v|\pre_<(v) \cup W}$ is any $p_{V|W}$-version of the conditional
distribution of $X_v | X_{\pre_<(v)}, X_W$.
%
\end{prop}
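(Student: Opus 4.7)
The plan is to proceed by induction on $|V|$. The base case $|V|=1$ is immediate: there is a single district $D_1 = \{v\}$, the product in (\ref{eqn:gform}) contains only the single factor $p(x_v \,|\, x_W)$ (since $\pre_<(v) = \emptyset$), and $p_{V|W} = r_1$ forces $r_1 = p_{V|W}$ directly, after using Remark~\ref{rmk:equiv} to identify any irrelevant fixed coordinates.

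For the inductive step, I would let $v$ be the last vertex in the topological ordering $<$, and let $D := D_{i^*}$ be its district. Because $v$ has no children in $V$, the set $V \setminus \{v\}$ is ancestral in $\G$, and $v$ does not appear in $\pa_\G(D_j)$ for any $j \neq i^*$; consequently $r_{i^*}$ is the only factor of $p_{V|W} = \prod_i r_i$ depending on $x_v$. Summing out $x_v$ therefore gives
\[
p_{V\setminus v\,|\,W} = \Big(\sum_{x_v} r_{i^*}\Big) \cdot \prod_{j \neq i^*} r_j.
\]
Write $r'_{i^*} := \sum_{x_v} r_{i^*}$. If $|D|=1$ then $r'_{i^*} \equiv 1$; otherwise $D \setminus \{v\}$ is random-ancestral in $\G[D]$ (since $v$ has no children in $D$), so condition (ii) of the recursive factorization of $r_{i^*}$ yields that $r'_{i^*}$ recursively factorizes with respect to $\G[D \setminus \{v\}]$. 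In either case the displayed identity is a valid recursive factorization of $p_{V\setminus v\,|\,W}$ with respect to $\G[V \setminus \{v\}]$, whose districts are $\{D_j\}_{j \neq i^*}$ together with the districts of $\G[D \setminus \{v\}]$.

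Applying the inductive hypothesis to this smaller problem, each district factor equals the product of $p_{V\setminus v\,|\,W}$-conditionals indexed by its random vertices; since marginalising over $x_v$ leaves marginals (and hence conditionals) on any $S \subseteq V \setminus \{v\}$ unchanged, these agree with $p_{V|W}(x_w \,|\, x_{\pre_<(w)}, x_W)$. In particular $r_j = \prod_{w \in D_j} p(x_w \,|\, x_{\pre_<(w)}, x_W)$ for every $j \neq i^*$, and multiplying the sub-district factors of $r'_{i^*}$ yields $r'_{i^*} = \prod_{w \in D \setminus \{v\}} p(x_w \,|\, x_{\pre_<(w)}, x_W)$. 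To recover $r_{i^*}$, I would combine the chain-rule identity $p_{V|W} = \prod_{w \in V} p(x_w \,|\, x_{\pre_<(w)}, x_W)$ with the already-known factors for $j \neq i^*$ and divide, obtaining $r_{i^*} = p_{V|W}/\prod_{j \neq i^*} r_j = \prod_{w \in D} p(x_w \,|\, x_{\pre_<(w)}, x_W)$, as required.

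The main obstacle will be the careful bookkeeping around the reachable subgraph $\G[V \setminus \{v\}]$ and around versions of conditionals. In particular one must invoke Lemma~\ref{lem:welldef} to confirm that $\G[V \setminus \{v\}]$ really is the graph whose districts are as described, and interpret each division above as holding `up to a $p_{V|W}$-version' at points where a denominator vanishes. The core recursion-and-division argument itself is conceptually straightforward.
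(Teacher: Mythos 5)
Your proof is correct and follows essentially the same route as the paper's: induct on $|V|$, peel off the $<$-last vertex, use the ancestral-margin condition to obtain a recursive factorization of $p_{V\setminus\{v\}|W}$, apply the inductive hypothesis, and recover the last district factor by cancelling the remaining factors (the paper handles the fact that $D\setminus\{v\}$ may split into several districts by generalizing the statement to unions of districts, whereas you split $r'_{i^*}$ via condition (i) and recombine — a purely cosmetic difference). The division step and the "version" bookkeeping you flag are handled no more carefully in the paper's own argument, so no gap beyond what the original proof already tolerates.
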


\begin{rmk}
The equation in (\ref{eqn:gform}) is an instance of the \emph{g-formula} of \citet{robins:86}. 
The result also appears as Corollary 1 in \citet[][Section 4.3]{tian:thesis}, in the case of latent 
variable models.
\end{rmk}

\begin{proof}
  For the purposes of induction we generalize the result slightly to
  allow $D_i$ to be collections of several districts.  Let
  $E_i \equiv \pa_\G(D_i) \setminus D_i$.  We proceed by induction on
  $|V|$: if $|V| \leq 1$ there is nothing to show.  Otherwise, let
  $t \in D_k$ be the last vertex in the ordering $<$, so that $x_t$
  only appears as a variable in the factor $r_k$.  Then
\begin{align*}
p_{V \setminus \{t\}|W}(x_{V \setminus \{t\}} \,|\, x_W) &\equiv \sum_{x_t} p_{V|W}(x_V \,|\, x_W)\\ 
&= \sum_{x_t} \prod_{i=1}^k r_i(x_{D_i} \,|\, x_{E_i})\\
&= \left( \sum_{x_t} r_k(x_{D_k} \,|\, x_{E_k}) \right) \prod_{i=1}^{k-1} r_i(x_{D_i} \,|\, x_{E_i}) \\
&= \tilde{r}_k(x_{D_k \setminus \{t\}} \,|\, x_{E_k}) \prod_{i=1}^{k-1} r_i(x_{D_i} \,|\, x_{E_i})
\end{align*}
where, by property 1 of recursive factorization, the kernel
$\tilde{r}_k$ recursively factorizes with respect to the graph
$\G[D_k \setminus \{t\}]$.  Similarly all the factors $r_i$ for
$i=1,\ldots,k-1$ recursively factorize with respect to $\G[D_i]$, so
by the induction hypothesis each such $r_i$ is of the required form
(\ref{eqn:gform}), and
\begin{align*}
\tilde{r}_k(x_{D_k \setminus \{t\}} \,|\, x_{E_k}) &= \prod_{v \in D_k \setminus \{t\}} p_{v|\pre_<(v) \cup W}(x_v \,|\, x_{\pre_<(v)} , x_W).
\end{align*}
But then
%
\begin{align}
\prod_i r_i = p_{V|W} &= p_{t|VW \setminus \{t\}} \cdot p_{V \setminus \{t\}|W} = p_{t|VW \setminus \{t\}} \cdot \tilde{r}_k \cdot \prod_{i=1}^{k-1} r_i; \label{eqn:reprod}
\end{align}
therefore 
\begin{align*}
r_k(x_{D_k} \,|\, x_{E_k}, x_W) &= p_{t|VW \setminus \{t\}}(x_t \,|\, x_{V \setminus \{t\}}, x_W) \cdot \tilde{r}_k
\end{align*}
and $p_{t|VW \setminus \{t\}}$ satifies (\ref{eqn:reprod}) if and only if
it is a version of the relevant conditional distribution, as required.
\end{proof}

The next result shows that the positivity of $p_{V|W}$ is preserved in
any derived kernels.

\begin{lem} \label{lem:pos} 
  Let $p_{V|W}(x_V \,|\, x_W)$ be a probability distribution, $<$ some total
ordering on $V$, and let $A \subseteq V$ and $B \equiv W \cup \pre_<(A) \setminus A$.
Define
\begin{align*}
r_{A|B}(x_{A} | x_{B}) \equiv \prod_{v \in A} p_{v|\pre_<(v),W}(x_{v} | x_{\pre_<(v)}, x_W),
\end{align*}
for some versions $p_{v|\pre_<(v),W}$ of the conditional distributions of $X_v \,|\, X_W, X_{\pre_<(v)}$.

Then:
\begin{enumerate}[(a)]
\item $r_{A|B}$ is a kernel for $X_A \,|\, X_B$;
\item for any $T \subseteq V$, $x_T \in \X_T$ and $x_W \in \X_W$, if
  $p_{T|W}(x_{T} \,|\, x_W) > 0$ then
  \[
  r_{T\cap A | B}(x_{T\cap A} \,|\, x_B) \equiv \sum_{y_{A \setminus T}} r_{A | B}(y_{A \setminus T}, x_{T \cap A} \,|\, x_B) > 0
  \] and all versions of
  $r_{T\cap A | B}(x_{T\cap A} \,|\, x_B)$ are the same;
\item if $p_{T|W}(x_{T} \,|\, x_W) = 0$ then there exists $t \in T$
  such that (every version of)
  \[
p_{t|\pre_<(t), W}(x_t \,|\, x_{\pre_<(t)}, x_W) = 0.
\]
\end{enumerate}
\end{lem}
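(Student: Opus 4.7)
The proof splits into three parts. I first verify that the \emph{pointwise chain rule}
\[
p_{V|W}(x_V \,|\, x_W) \;=\; \prod_{v \in V} p_{v|\pre_<(v), W}(x_v \,|\, x_{\pre_<(v)}, x_W)
\]
holds for any choice of versions of the conditional distributions, by a short induction on $|V|$ that case-splits on whether the sub-marginal $p_{\pre_<(v_N)|W}(x_{\pre_<(v_N)}\,|\,x_W)$ at $v_N = \max_< V$ is positive or zero. Part (a) then follows by induction on $|A|$: writing $v_* = \max_< A$, the variable $x_{v_*}$ appears only in the factor $p_{v_*|\pre_<(v_*),W}$ (because no later element of $A$ lists $v_*$ as a predecessor), and this factor sums to $1$ over $\X_{v_*}$ for any conditioning values; the remaining product has the same form on $A \setminus \{v_*\}$ and sums to $1$ by induction.

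For part (b), positivity follows by choosing an extension $y^*_{V \setminus T}$ of $x_T$ with $p_{V|W}(x_T, y^*_{V \setminus T}\,|\,x_W) > 0$, which exists since $p_{T|W}(x_T\,|\,x_W) > 0$. The pointwise chain rule forces every factor, and hence every sub-marginal $p_{\pre_<(v)|W}(x_{\pre_<(v)}\,|\,x_W)$, to be strictly positive at this joint value, so the summand in $\sum_{y_{A \setminus T}} r_{A|B}$ indexed by the restriction of $y^*_{V \setminus T}$ to $A \setminus T$ is strictly positive, giving $r_{T \cap A|B}(x_{T \cap A}\,|\,x_B) > 0$. For uniqueness I examine each summand $y_{A \setminus T}$ separately: either all the sub-marginals $p_{\pre_<(v)|W}$ for $v \in A$ are positive at the resulting point and every factor is uniquely determined, or the pointwise chain rule localises a uniquely-zero factor inside the $A$-product and the whole summand vanishes in every version. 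In either case the contribution is version-independent.

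For part (c), order $T = \{t_1 < \cdots < t_n\}$ compatibly with $<$ and define $i^* = \min\{i : p_{T_{\leq i}|W}(x_{T_{\leq i}}\,|\,x_W) = 0\}$, which is well-defined because $p_{T_0|W} \equiv 1$ and $p_{T|W}(x_T\,|\,x_W) = 0$. By minimality $p_{T_{<i^*}|W}(x_{T_{<i^*}}\,|\,x_W) > 0$, so every version of $p_{t_{i^*}|T_{<i^*},W}(x_{t_{i^*}}\,|\,x_{T_{<i^*}}, x_W)$ equals zero. Setting $t = t_{i^*}$ and marginalising the vanishing $(\{t\} \cup T_{<i^*})$-joint over $\X_{\pre_<(t) \setminus T}$ while retaining positivity of the $T_{<i^*}$-marginal, we obtain values $x_{\pre_<(t) \setminus T}$ at which $p_{\pre_<(t)|W}(x_{\pre_<(t)}\,|\,x_W) > 0$ yet $p_{\{t\} \cup \pre_<(t)|W}(x_t, x_{\pre_<(t)}\,|\,x_W) = 0$; at these values every version of $p_{t|\pre_<(t),W}(x_t\,|\,x_{\pre_<(t)}, x_W)$ equals $0$.

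The main obstacle will be the uniqueness claim in (b): for summands whose individual conditional factors are ambiguous, one must argue that a uniquely-zero factor sits specifically inside the $A$-product rather than in the $V \setminus A$ remainder, by tracing along the chain where a sub-marginal first fails and locating the earliest vertex in $A$ above it.
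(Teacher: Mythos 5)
Your overall strategy coincides with the paper's: part (a) is the same reverse-order (telescoping) summation; the positivity half of (b) is obtained exactly as in the paper, by extending $x_T$ to a point $x_V$ with $p_{V|W}(x_V\,|\,x_W)>0$ and reading off positivity of every factor from the pointwise chain rule; and your treatment of (c) via the first index $i^*$ at which the initial-segment marginal of $T$ vanishes is, if anything, more careful than the paper's (which writes $p_{T|W}=\prod_{t\in T}p_{t|\pre_<(t),W}$, an identity that does not literally hold when $\pre_<(t)\not\subseteq T$; your argument, which first localises the zero within $T$ and then chooses a completion of $x_{\pre_<(t)\setminus T}$ keeping the $\pre_<(t)$-marginal positive, is the right way to make it precise).

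The one genuine problem is the uniqueness claim in (b), which you correctly flag as the main obstacle --- but your proposed repair does not close it. The dichotomy ``either all sub-marginals $p_{\pre_<(v)|W}$ for $v\in A$ are positive, or a uniquely-zero factor sits inside the $A$-product'' is false: the first failure of the initial-segment marginals can occur at a vertex $w\in B\cap V$ lying $<$-between elements of $A$, in which case the uniquely-zero factor belongs to the $V\setminus A$ remainder and every later factor in the $A$-product is \emph{arbitrary}, not zero, so ``locating the earliest vertex in $A$ above it'' yields nothing. Concretely, take $V=\{1,2,3,4\}$ with $1<2<3<4$, $A=\{1,2,4\}$ (so $B=W\cup\{3\}$), $T=\{1,4\}$, and a distribution with $X_1\equiv 0$, $X_2$ uniform, $X_3=X_2$ deterministically, $X_4$ uniform; then $p_{14}(0,0)=1/2>0$, yet $\sum_{y_2}p_1(0)\,p_{2|1}(y_2|0)\,p_{4|123}(0\,|\,0,y_2,0)=\tfrac14+\tfrac{\alpha}{2}$, where $\alpha=p_{4|123}(0\,|\,0,1,0)$ is an arbitrary version --- so here even the whole sum, not just a summand, is version-dependent. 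This shows that the uniqueness assertion requires a further qualification on $x_{B\setminus T}$ (it is unproblematic when $B\cap V\subseteq T$, which is what the applications in Theorem 4.5 effectively rely on via Proposition 3.1 and Remark 3.2). In fairness, the paper's own proof is equally terse here: it verifies version-independence only at the single chosen positive completion and then asserts uniqueness of the sum. So your (a), (c) and the positivity part of (b) are sound and match the paper; the uniqueness part of (b) remains open in your write-up, and the specific route you sketch for closing it cannot succeed without an additional hypothesis of the above kind.
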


\begin{proof}
(a) Clearly $r_{A|B} \geq 0$ since it is a product of conditional distributions, which are 
themselves non-negative.  In addition, by summing the expression above in reverse order of $<$
it is easy to see that $\sum_{x_A} r_{A|B}(x_A \,|\, x_B) = 1$ for any $x_B \in \X_B$.  Hence $r_{A|B}$
is a kernel.

For (b) note that if $p_{T|W}(x_{T} \,|\, x_W) > 0$ then there exists
some $x_{V \setminus T} \in \X_{V \setminus T}$ such that $p_{V|W}(x_{V} \,|\, x_W) > 0$.  Then
\begin{align*}
p_{V|W}(x_V \,|\, x_W) &= \prod_{v \in V} p_{v|\pre_<(v),W}(x_v \,|\, x_{\pre_<(v)}, x_W)\\
  &= r_{A|B}(x_A \,|\, x_B) \prod_{v \in V \setminus A} p_{v|\pre_<(v),W}(x_v \,|\, x_{\pre_<(v)}, x_W),
\end{align*}
so if the left hand side is positive then so is
$r_{A|B}(x_A \,|\, x_B) > 0$.  Since all the events in this expression
have positive $p_{V|W}$ probability, all versions of each conditional
probability are equal.

Lastly, if $p_{T|W}(x_{T} \,|\, x_W) = 0$ then clearly
some factor of 
\begin{align*}
0 = p_{T|W}(x_{T} \,|\, x_W) = \prod_{t \in T} p_{t|\pre_<(t),W}(x_{t} \,|\, x_{\pre_<(t)}, x_W)
\end{align*}
is also zero.  Pick the $<$-minimal $t$ such that this holds, so that 
$p_{\pre_<(t) \,|\, W}(x_{\pre_<(t)} \,|\, x_W) > 0$.  Then (c) holds.
\end{proof}

A corollary of this lemma is that, if $p_{V|W}$ is strictly positive,
the kernels  $r_i$ derived from it by application of Definition
\ref{dfn:rf} are uniquely defined.


\begin{cor}
  Let $p_{V|W} \in \RF(\G)$ be a strictly positive kernel.  
  Then any kernel derived from $p_{V|W}$ by repeated applications 
  of Definition \ref{dfn:rf} (using $\G$) is uniquely defined.
\end{cor}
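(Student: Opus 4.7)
My plan is to prove the corollary by induction on $|V|$. The base case $|V|=1$ is immediate: Definition \ref{dfn:rf} imposes no constraint, and the only derived kernel is $p_{V|W}$ itself. For the inductive step, any kernel derived in one step from $p_{V|W}$ is either a district factor $r_i$ arising from condition (i), or a marginal $p_{A \cap V | A \cap W}$ arising from condition (ii). I would show that each is uniquely defined and strictly positive; then since both operations strictly decrease the size of the random vertex set whenever the recursion actually continues, the inductive hypothesis applies to each further application of Definition \ref{dfn:rf}.

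For condition (i), I would fix any topological ordering $<$ of $V$ and invoke Proposition \ref{prop:gform} to write
\[
r_i(x_{D_i} \mid x_{\pa_\G(D_i) \setminus D_i}) = \prod_{v \in D_i} p_{v|\pre_<(v) \cup W}(x_v \mid x_{\pre_<(v)}, x_W).
\]
Strict positivity of $p_{V|W}$ forces every conditioning event $(x_{\pre_<(v)}, x_W)$ to have positive marginal probability, so each conditional on the right is uniquely defined (no choice of version remains) and strictly positive; hence $r_i$ is uniquely defined and strictly positive. If $k=1$ then $r_1 = p_{V|W}$ and there is nothing to check; if $k \geq 2$ then $|D_i| < |V|$, and the inductive hypothesis applies to $r_i$ as an element of $\RF(\G[D_i])$.

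For condition (ii), the marginal $p_{A \cap V | A \cap W}$ is obtained by summation over $x_{V \setminus A}$, which is a deterministic operation preserving strict positivity. Since $V \setminus A \neq \emptyset$, we have $|A \cap V| < |V|$, so the inductive hypothesis applies to this marginal viewed as an element of $\RF(\G[V \cap A])$.

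The main (modest) subtlety is ensuring that at every level of the recursion the derived kernel remains strictly positive so that Proposition \ref{prop:gform} continues to deliver version-free equalities; Lemma \ref{lem:pos}(b) (applied with $T=V$) handles this propagation for factorization steps, and marginalization trivially preserves positivity. There is no genuine obstacle beyond bookkeeping the induction.
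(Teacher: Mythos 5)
Your proof is correct and follows essentially the same route as the paper's: uniqueness of marginalization for condition (ii), Proposition \ref{prop:gform} for uniqueness of the district factors under positivity, and Lemma \ref{lem:pos} to propagate strict positivity down the recursion. You merely make explicit the induction on $|V|$ that the paper leaves implicit in the phrase ``following the recursion with each unique factor gives the result.''
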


\begin{proof}
  Clearly applying (ii) is always unique, since it only involves
  summing.  By Proposition \ref{prop:gform}, application of (i) is a
  factorization into univariate conditional distributions, each of
  which is uniquely defined when the joint distribution is positive.
  In addition, by Lemma \ref{lem:pos} each such conditional
  distribution is also strictly positive, so following the recursion
  with each unique factor gives the result.
\end{proof}

\section{Intrinsic Sets and Partitions} \label{sec:part}

In this section we provide the necessary theory to link the graphical
notions of Section \ref{sec:nested} to the parameterization in Section
\ref{sec:main}.  The parameterization uses factorizations of the
distribution into pieces which correspond to special subsets of
vertices in the graph; these subsets are themselves derived from the
idea of the `reachable' sets already introduced.

\begin{dfn} \label{dfn:int}
  Let $\G$ be a CADMG.  A non-empty set $S$ of random vertices is
  \emph{intrinsic} if it is bidirected-connected and the graph $\G[S]$
  is reachable from $\G$.  

  For each intrinsic set $S$, define the
  associated \emph{recursive head} by $\rh_\G(S) = \sterile_\G(S)$;
  i.e.\ it is the set of sink nodes in the induced subgraph over 
  $S$.  The set of recursive heads is denoted by $\mathcal{H}(\G)$, 
  or simply $\mathcal{H}$.\footnote{Note 
  that the definition of a recursive head differs from
  the \emph{head} used in \citet{evans:14} for ADMGs.  We 
  will see in Example \ref{exm:vermahead} that $\{E,Y\}$ is a recursive head
  in the graph in Figure \ref{fig:verma}(b), but one can check that it 
  is not a head in the \citet{evans:14} sense.}  
  
  The \emph{tail} associated with a recursive head $H$ 
  (and the relevant intrinsic set $S$) is $T(H) \equiv \pa_\G(S)$.  
  We will denote a tail by $T$ if it is unambiguous which recursive head it is 
  derived from.
\end{dfn}

Intrinsic sets are central to the nested Markov property as they are
the sets of variables over which the kernels $r_i$ in Definition 
\ref{dfn:rf} specify distributions.  
Intrinsic sets do not appear to be easily characterized in terms of the 
presence of a  path in the original graph; Definition \ref{dfn:int}
 implicitly considers a sequence of graphs
generated via repeated applications of the two operations $\fd$
  and $\fm$. The set of intrinsic sets may be found
in polynomial time; see \citep{shpitser:11}.

\begin{exm}
For the graph $\mathcal{L}$ in Figure \ref{fig:graph2}, $\{2,4,5\}$
and $\{3\}$ are districts and therefore intrinsic sets.  The graph
$\mathcal{L}[\{2,4,5\}]$ is shown in Figure \ref{fig:reached}(a);
applying $\fm$ appropriately to random-ancestral sets yields all the other
intrinsic sets: $\{2,5\}, \{4,5\}, \{2\}, \{4\}$ and $\{5\}$.  
Each recursive head is equal to the associated intrinsic set.
\end{exm}

\begin{dfn}
  Let $B \subseteq V$ be a set of random vertices in $\G$.  Suppose we alternately 
  marginalize
  vertices that are not ancestors of $B$, and remove those which are
  not in the same district as some element of $B$:
\begin{align}
& \G \mapsto \fm_{\an_\G(B)}(\G), && \G \mapsto \fd_{\dis_\G(B)}(\G). \label{eqn:iter}
\end{align}

If these two operations change anything at all then they reduce the
size of the set of random vertices; consequently repeatedly applying both
these operations successively will
eventually reach some stable point, which is a graph whose set of 
random vertices we denote by $I_\G(B)$.  Note that at each step of
(\ref{eqn:iter}) the random vertices in the resulting graph always
include $B$, so $B \subseteq I_\G(B)$.  

If $I_\G(B)$ is bidirected-connected then it is an intrinsic set
by definition, and we call $I_\G(B)$ the \emph{intrinsic closure} 
of $B$.  
\end{dfn}

\begin{prop} \label{prop:preserve}
If $\G' = \G[C]$ is reachable from $\G$ for some set $C \supseteq B$, then
\begin{align*}
\fm_{\an_{\G'}(B)}(\G') \subseteq \fm_{\an_{\G}(B)}(\G) && \fd_{\dis_{\G'}(B)}(\G') \subseteq \fd_{\dis_{\G}(B)}(\G).
\end{align*}
\end{prop}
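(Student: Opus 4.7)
The plan is to propagate a very simple observation through two layers of graphical operations. By Lemma \ref{lem:welldef}, the graph $\G' = \G[C]$ is a literal sub-CADMG of $\G$: its random vertex set $C$ is contained in the random vertex set $V$ of $\G$, and every directed or bidirected edge of $\G'$ is also an edge of $\G$. I would show that this subgraph relationship is preserved first by the $\an$ and $\dis$ operators applied to $B$, and then by the $\fm$ and $\fd$ transformations that use those operators.

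The first step is to verify the two underlying vertex-set inclusions $\an_{\G'}(B) \subseteq \an_\G(B)$ and $\dis_{\G'}(B) \subseteq \dis_\G(B)$. For ancestors, any witnessing directed path $v_0 \rightarrow v_1 \rightarrow \cdots \rightarrow v_k$ with $v_k \in B$ in $\G'$ is a path in $\G$ as well, since $\G'$ retains only a subset of $\G$'s directed edges; hence $v_0 \in \an_\G(B)$. For districts, any bidirected-connecting path from $b \in B$ in $\G'$ has intermediate vertices lying in $C \subseteq V$ and uses bidirected edges that are all present in $\G$, so the same path witnesses membership in $\dis_\G(B)$. (Here I am tacitly assuming $B$ is bidirected-connected, which is necessary for $\dis_{\G}(B)$ and $\dis_{\G'}(B)$ to be genuine districts and for $\fd$ to be defined.)

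The second step is to unpack the definitions of $\fm$ and $\fd$ to turn the vertex-set inclusions into graph inclusions. By definition, $\fm_{\an_{\G'}(B)}(\G')$ has random vertices $\an_{\G'}(B)$, fixed vertices $\pa_{\G'}(\an_{\G'}(B))\setminus\an_{\G'}(B)$, and exactly the $\G'$-edges among these vertices; the analogous description with $\G$ in place of $\G'$ and $\an_\G(B)$ in place of $\an_{\G'}(B)$ gives $\fm_{\an_\G(B)}(\G)$. Since every $\G'$-edge is a $\G$-edge and $\an_{\G'}(B) \subseteq \an_\G(B)$, each random vertex and each edge of $\fm_{\an_{\G'}(B)}(\G')$ is a random vertex or an edge of $\fm_{\an_\G(B)}(\G)$. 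A completely parallel argument, with $\dis$ in place of $\an$, handles the claim for $\fd$.

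The main obstacle is pinning down what ``$\subseteq$'' should mean between two CADMGs, since a vertex may be random in one graph and fixed in the other: if $v \in V \setminus C$ but $v \in \pa_\G(C)$ then $v$ is random in $\G$ and fixed in $\G'$, and this discrepancy propagates through $\fm$ and $\fd$. The reading needed for the subsequent monotonicity arguments about the iteration in (\ref{eqn:iter}) defining $I_\G(B)$ is inclusion of the random vertex sets together with inclusion of the edge sets, and under this reading the two vertex-set inclusions from the first step, combined with the edge-preservation observation, deliver the result directly.
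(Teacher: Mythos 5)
Your proof is correct and follows essentially the same route as the paper's: establish the vertex-set inclusions $\an_{\G'}(B)\subseteq\an_\G(B)$ and $\dis_{\G'}(B)\subseteq\dis_\G(B)$ from the fact that $\G'=\G[C]$ is a subgraph of $\G$, and then observe that $\fm$ and $\fd$ (equivalently, the $\G[\cdot]$ construction) are monotone in the chosen vertex set. Your explicit discussion of what ``$\subseteq$'' means between two CADMGs is a reasonable clarification of a point the paper leaves implicit, but it does not change the argument.
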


\begin{proof}
From Lemma \ref{lem:welldef}, $\G' = \G[C]$; 
We have $\fm_{\an_{\G'}(B)}(\G') = \G[\an_{\G'}(B)]$ and $\fm_{\an_{\G}(B)}(\G) = \G[\an_{\G}(B)]$.  
Any ancestor of $B$ in the subgraph $\G'=\G[C]$ must be  also be an ancestor in $\G$,
so clearly $\G[\an_{\G'}(B)] \subseteq \G[\an_{\G}(B)]$.  A similar argument holds
for $\fd$.  
\end{proof}

%

Both of the operators in (\ref{eqn:iter}) are idempotent; in addition, since
the sets $\an(B)$ and $\dis(B)$ only get smaller through repeated 
iterations, it follows from Proposition \ref{prop:preserve} that the stable point does not 
depend upon which operation is applied first.  Hence $I_\G(B)$ is 
well-defined. 

\begin{figure}
\begin{center}
\begin{tikzpicture}
[>=stealth, node distance=20mm]
\pgfsetarrows{latex-latex};
\begin{scope}
 \node[rv] (1) {$Z$};
 \node[rv, right of=1] (2) {$X$};
 \node[rv, right of=2] (3) {$Y$};
 \draw[da] (1) -- (2);
 \draw[da] (2) -- (3);
\draw[ba] (2.40) .. controls +(40:.5) and +(140:.5) .. (3.140);
\node[below of=2, yshift=7mm] {(a)};
\end{scope}
\begin{scope}[xshift=7cm]
 \node[rv, rectangle] (1) {$Z$};
 \node[rv, right of=1] (2) {$X$};
 \node[rv, right of=2] (3) {$Y$};
 \draw[da] (1) -- (2);
 \draw[da] (2) -- (3);
\draw[ba] (2.40) .. controls +(40:.5) and +(140:.5) .. (3.140);
\node[below of=2, yshift=7mm] {(b)};
\end{scope}
\end{tikzpicture}
\caption{(a) A (C)ADMG $\G$ and (b) $\G_1 \equiv \fd_{\dis(Y)}(\G)$.}
\label{fig:iv}
\end{center}
\end{figure}

\begin{exm}
Let $\G$ be the graph in Figure \ref{fig:iv}(a) and consider the 
intrinsic closure of the bidirected-connected set $\{Y\}$.  The graph $\fm_{\an(Y)}(\G)$
is just $\G$, since everything is an ancestor of $Y$.  However
$\G_1 \equiv \fd_{\dis(Y)}(\G)$ gives the graph $\G[\{X,Y\}]$
shown in Figure \ref{fig:iv}(b) 
in which $Z$ is fixed, but the edges are all unchanged.
It then becomes clear that repeatedly applying $\fm$ and $\fd$ 
will not result in any further changes to the graph.
Hence the intrinsic closure is just the set of random vertices in this
graph: $I_\G(\{Y\}) = \{X,Y\}$.

On the other hand, consider the graph $\mathcal{L}$ in Figure
\ref{fig:graph2} and the intrinsic closure of the set $\{4,5\}$.  
Again $\fm_{\an(\{4,5\})}(\mathcal{L}) = \mathcal{L}$, and
then $\fd_{\dis(\{4,5\})}(\mathcal{L})$ gives the graph in 
Figure \ref{fig:reached}(a).  Applying $\fm_{\an(\{4,5\})}(\cdot)$ 
to this graph yields the graph in Figure \ref{fig:reached}(c), 
whose only random vertices are $\{4,5\}$.  Hence, the procedure
terminates and, since it forms a district in this graph, 
$\{4,5\}$ is an intrinsic set and its own intrinsic closure. 
\end{exm}

One consequence of the next result is that, as we would 
hope, every intrinsic set is its own intrinsic closure.

\begin{lem} \label{lem:intclos} Let $S$ be an intrinsic set with
  recursive head $H$ in a graph $\G$.  Then for any $H \subseteq A
  \subseteq S$ we have $I_\G(A) = S$.
\end{lem}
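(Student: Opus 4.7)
The plan is to prove the two inclusions $S \subseteq I_\G(A)$ and $I_\G(A) \subseteq S$ by separate inductions.

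\emph{First inclusion.} I would induct on the steps of the defining iteration (\ref{eqn:iter}) for $I_\G(A)$, showing that the random vertex set $V_i$ of each intermediate graph contains $S$. By Lemma \ref{lem:welldef} that intermediate graph is precisely $\G[V_i]$, so its induced subgraph on $S$ coincides with $\G[S]$. Two observations drive the step: (a) because $H = \sterile_\G(S)$ is the set of sink nodes of $\G[S]$, every vertex in $S$ has a directed path into $H \subseteq A$ lying entirely in $S$, and this path is preserved in $\G[V_i]$, so $S \subseteq \an_{\G[V_i]}(A)$; and (b) because $S$ is bidirected-connected and $A \cap S \supseteq H$ is non-empty, $S$ lies in the district of $\G[V_i]$ containing $A$, so $S \subseteq \dis_{\G[V_i]}(A)$. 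Consequently neither $\fm_{\an(A)}$ nor $\fd_{\dis(A)}$ can ever excise a vertex of $S$, and $S$ is contained in the stable point $I_\G(A)$.

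\emph{Second inclusion.} I would exploit the hypothesis that $\G[S]$ is reachable to fix a witness sequence $\G = \G^{(0)} \to \G^{(1)} \to \cdots \to \G^{(m)} = \G[S]$, in which each step applies either $\fm_{A^{(j)}}$ with $A^{(j)} \supseteq S$ random-ancestral in $\G^{(j)}$, or $\fd_{D^{(j)}}$ with $D^{(j)} \supseteq S$ a district of $\G^{(j)}$. I would then prove by induction on $j$ that $I_\G(A) \subseteq V^{(j)}$, so that in particular $I_\G(A) \subseteq V^{(m)} = S$. The stable graph $\G^\ast = \G[I_\G(A)]$ satisfies $\an_{\G^\ast}(A) \cap I_\G(A) = I_\G(A)$ and $\dis_{\G^\ast}(A) = I_\G(A)$, where I use the first inclusion together with the bidirected-connectedness of $S$ to ensure that $A$ lies in a single district of $\G^\ast$. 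By the inductive hypothesis and Lemma \ref{lem:welldef}, $\G^\ast$ is an induced subgraph of $\G^{(j)} = \G[V^{(j)}]$, so directed paths and bidirected walks in $\G^\ast$ persist in $\G^{(j)}$. If step $j+1$ is $\fm_{A^{(j)}}$, every vertex of $I_\G(A)$ is a random ancestor of $A \subseteq A^{(j)}$ in $\G^{(j)}$ and therefore lies in the random-ancestral set $A^{(j)} = V^{(j+1)}$; if it is $\fd_{D^{(j)}}$, every vertex of $I_\G(A)$ is bidirected-connected to $A \subseteq D^{(j)}$ in $\G^{(j)}$ and therefore lies in the single district $D^{(j)} = V^{(j+1)}$.

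The main obstacle will be the bookkeeping required to compare ancestry and bidirected connectivity across the nested family of induced subgraphs, and in particular verifying that the fixed-point characterization of $I_\G(A)$ interacts correctly with each type of reachability step (the use of random-ancestrality for $\fm$ and of single-district membership for $\fd$). Once these comparisons are in place, both inductions are short, and together they deliver $I_\G(A) = S$.
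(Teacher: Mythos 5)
Your proposal is correct and follows essentially the same route as the paper: the first inclusion uses the fact that every vertex of $S$ reaches $H$ by a directed path inside $S$ and that $S$ is bidirected-connected, so neither operation in the closure iteration can excise a vertex of $S$ without first removing part of $H\subseteq A$; the second uses the stable-point properties of $I_\G(A)$ (every vertex an ancestor of $A$, all in one district with $A$) to show it survives every step of a reachability sequence witnessing $\G[S]$. The paper states these persistence claims tersely and phrases the second part as a contradiction with reachability of $\G[S]$, whereas you formalize both as explicit inductions along the respective operation sequences, but the underlying argument is the same.
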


\begin{proof}
  By the definition of $H$, every vertex in $S$ is either in $H$ or is
  a parent of some other element of $S$.  Since $S$ is
  bidirected-connected, the operations $\fd_A$, $\fm_A$ therefore
  cannot remove any element of $S$ without also having removed an
  element of $H$, but this is not allowed since $H \subseteq A$.
  Hence no element of $S$ is ever removed, and $I_\G(A) \supseteq S$.

  Suppose that $I_\G(A) \supset S$ and so
  $B \equiv I_\G(A) \setminus S$ is non-empty.  Every element of $B$
  is an ancestor of some other entry in $I_\G(A)$.  In addition, every
  element of $I_\G(A)$ is connected to $A \subseteq S$ by sequences of 
  bidirected edges through $I_\G(A)$, so $I_\G(A)$ is, like $S$, a
  bidirected-connected set.  
 Thus we cannot remove any element of $B$ via
  operations of the form $\fm, \fd$ without first removing some
  element of $A \subseteq S$.  If $B$ is non-empty then this
  implies $S$ is not reachable, which
  contradicts the assumption that $S$ is intrinsic.
\end{proof}

Note that a corollary of this result is that recursive heads are
in one-to-one correspondence with intrinsic sets: two distinct 
intrinsic sets may not have the same recursive head.

\begin{prop} \label{prop:conn}
If $B$ is a bidirected-connected set with intrinsic closure $I_\G(B)$, 
then the recursive head $H$ associated with the intrinsic set $I_\G(B)$ 
satisfies $H \subseteq B$.
\end{prop}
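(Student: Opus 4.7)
The plan is to suppose for contradiction that some recursive head vertex lies outside $B$ and exploit the fact that the iterative procedure defining $I_\G(B)$ has stabilized to derive a contradiction with sterility.

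First I would set up notation: let $S = I_\G(B)$, so $H = \rh_\G(S) = \sterile_\G(S) = S \setminus \pa_\G(S)$, and let $\G^\ast \equiv \G[S]$ be the graph reached at the stable point (by Lemma \ref{lem:welldef}). Next I would record the key consequence of stability: since further application of $\fm_{\an_{\G^\ast}(B)}$ and $\fd_{\dis_{\G^\ast}(B)}$ does not change $\G^\ast$, every random vertex of $\G^\ast$ must belong to $\an_{\G^\ast}(B)$; that is, every vertex of $S$ is an ancestor of $B$ in $\G^\ast$.

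Now pick an arbitrary $h \in H$ and suppose $h \notin B$. Since $h \in S = \an_{\G^\ast}(B)$ but $h \notin B$, there exists a nontrivial directed path $h = v_0 \to v_1 \to \cdots \to v_k = b$ in $\G^\ast$ for some $b \in B$, with $k \geq 1$. All intermediate vertices lie in $S$, and I would note that directed edges between vertices of $S$ in the original graph $\G$ are preserved in $\G[S]$ by Definition \ref{dfn:reach} (since the arrowhead is in $S$ and the tail is in $S \cup \pa_\G(S)$). In particular, $v_1 \in S$ is a child of $h$ in $\G$, contradicting $h \in \sterile_\G(S) = S \setminus \pa_\G(S)$.

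The only nontrivial step is verifying that sterility computed in the original graph $\G$ and in the reached subgraph $\G^\ast$ agree on directed edges internal to $S$; this is immediate from the definition of $\G[S]$. Everything else reduces to carefully unpacking what it means for $S$ to be a stable point of the iteration in \eqref{eqn:iter}. I do not anticipate any real obstacle here: the result is essentially a direct consequence of the fact that a sink node in $S$ cannot simultaneously have a proper descendant in $S$.
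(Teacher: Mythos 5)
Your proof is correct and takes essentially the same route as the paper's: both arguments use that stability of the iteration forces every vertex of $I_\G(B)$ to be an ancestor of $B$ in $\G[I_\G(B)]$, so any vertex outside $B$ has a child inside $I_\G(B)$ and therefore cannot lie in $\sterile_\G(I_\G(B)) = H$. You simply unpack the paper's two-sentence argument in more detail, including the (correct) check that directed edges of $\G[S]$ are also edges of $\G$.
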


\begin{proof}
By definition of intrinsic closure, every vertex $v$ in $I_\G(B)$ is an ancestor of
$B$ in $\G[I_\G(B)]$. If $v \notin B$ then $v \notin  \sterile_\G(I_\G(B))$, hence
$v \notin H$.
%
\end{proof}

\begin{lem} \label{lem:singlehead}
Every singleton $\{v\}$ for $v \in V$ is a recursive head.
\end{lem}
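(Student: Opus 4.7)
The plan is to exhibit, for each $v \in V$, an intrinsic set $S \subseteq V$ whose sterile subset equals $\{v\}$; the natural candidate is the intrinsic closure $S := I_\G(\{v\})$, and I would verify both that it is intrinsic and that $\rh_\G(S) = \{v\}$.

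First I would verify that $S$ is intrinsic. The iterative procedure defining $I_\G(B)$ preserves $B$ at every step, so $v \in S$. Let $\G^* := \G[S]$ denote the reachable subgraph at the stable point of the iteration. Stability under $\fd_{\dis(v)}$ means that $\dis_{\G^*}(v) = S$, which is exactly the statement that $S$ is bidirected-connected in $\G^*$; since the bidirected edges of $\G^* = \G[S]$ are just the bidirected edges of $\G$ with both endpoints in $S$, the set $S$ is bidirected-connected in $\G$ as well. Together with the fact that $\G[S]$ is reachable from $\G$, this verifies that $S$ is intrinsic.

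Next I would show $\sterile_\G(S) = \{v\}$. Stability under $\fm_{\an(v)}$ forces every random vertex of $\G^*$ to be an ancestor of $v$ in $\G^*$. For any $w \in S \setminus \{v\}$, a directed path $w \to u_1 \to \cdots \to v$ in $\G^*$ has each vertex past the initial one lying in $S$ (only random vertices receive arrowheads), so $u_1 \in S$ is a child of $w$ in $\G$; hence $w \in \pa_\G(S)$ and thus $w \notin \sterile_\G(S)$. For $v$ itself, if $v$ had a child $v' \in S$ then $v'$ would also be an ancestor of $v$ (by stability), producing a directed cycle in $\G$ and contradicting acyclicity; hence $v \notin \pa_\G(S)$, giving $v \in \sterile_\G(S)$. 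Combining the two directions yields $\rh_\G(S) = \sterile_\G(S) = \{v\}$, as required.

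The only delicate point is extracting the right consequences from the stability of the intrinsic-closure iteration: recognising that triviality of $\fd_{\dis(v)}$ on $\G^*$ gives bidirected-connectivity of $S$ in the ambient graph $\G$, while triviality of $\fm_{\an(v)}$ gives that every element of $S$ is a directed ancestor of $v$ via a path inside $S$. Once these observations are in place, the acyclicity argument that rules out any child of $v$ in $S$ is immediate.
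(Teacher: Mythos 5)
Your proof is correct and takes essentially the same approach as the paper: form the intrinsic closure $I_\G(\{v\})$ and show its sterile subset is exactly $\{v\}$. You merely spell out two details the paper's two-line argument leaves implicit, namely that stability under $\fd_{\dis(v)}$ yields bidirected-connectivity of the closure, and that acyclicity rules out $v$ having a child inside it.
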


\begin{proof}
Take the intrinsic closure $I_\G(\{v\})$ of $v$.
Every element of $I_\G(\{v\})$ other than $v$ is a parent of some 
other element of $I_\G(\{v\})$ by definition; therefore $\{v\}$ is 
the sterile set, and a recursive head.
\end{proof}

\begin{lem} \label{lem:intpres}
  Let $\G$ be a CADMG, and $\G'$ be a CADMG with random vertices $V'$,
  reachable from $\G$.  Then the intrinsic sets of $\G'$ 
  are precisely the intrinsic sets of $\G$ that are contained in $V'$,
  and their associated recursive heads and tails are the same.
\end{lem}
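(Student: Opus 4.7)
The plan is to exploit the characterization of intrinsic sets as fixed points of the intrinsic closure operator, applied in both $\G$ and $\G'$. By Lemma~\ref{lem:welldef}, since $\G'$ is reachable from $\G$ with random vertex set $V'$, we have $\G' = \G[V']$. A preliminary observation pins down the easy structural claims: for any $S \subseteq V'$, the induced CADMGs $\G[S]$ and $\G'[S]$ are literally the same graph, since in both cases one keeps exactly those edges of $\G$ whose heads lie in $S \subseteq V'$. In particular $\pa_\G(S) = \pa_{\G'}(S)$, $\sterile_\G(S) = \sterile_{\G'}(S)$, and the bidirected edges within $S$ are the same; so bidirected-connectedness, the recursive head, and the tail will all coincide in $\G$ and $\G'$ once I know that the same $S$ are intrinsic in both.

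For the direction ``$S$ intrinsic in $\G'$ implies $S$ intrinsic in $\G$'', the argument is essentially free: bidirected-connectedness transfers by the preliminary observation, and $\G[S] = \G'[S]$ is reachable from $\G$ by concatenating the operation sequences $\G \to \G'$ and $\G' \to \G[S]$.

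The main content is the reverse direction. Suppose $S \subseteq V'$ is intrinsic in $\G$ with head $H = \sterile_\G(S)$. By Lemma~\ref{lem:intclos}, $I_\G(H) = S$. I plan to compare the intrinsic closures of $H$ in $\G$ and $\G'$ and show they agree. Running the alternating operations $\fm_{\an(H)}$ and $\fd_{\dis(H)}$ in parallel from $\G$ and $\G'$, Proposition~\ref{prop:preserve} gives, at each step, that the graph obtained from $\G'$ is a subgraph of the one obtained from $\G$; taking the stable point yields $I_{\G'}(H) \subseteq I_\G(H) = S$. For the reverse inclusion, I will rerun the argument in the proof of Lemma~\ref{lem:intclos} inside $\G'$: $S$ is bidirected-connected in $\G'$, and since directed edges with head in $S$ agree in $\G$ and $\G'$, every vertex of $S \setminus H$ still has a child in $S$ when viewed in $\G'$; the same combinatorial reasoning therefore shows that any operation $\fm_{\an(A)}$ or $\fd_{\dis(A)}$ with $H \subseteq A$ starting from $\G'$ cannot delete a vertex of $S$ without first deleting an element of $H$, which is forbidden. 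Thus $S \subseteq I_{\G'}(H)$, so $I_{\G'}(H) = S$, and $S$ is intrinsic in $\G'$.

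The chief technical obstacle I anticipate is the iteration of Proposition~\ref{prop:preserve}: one needs to verify that after each $\fm$ or $\fd$ step the graph $\G'_k$ obtained from $\G'$ remains of the form $\G_k[V'_k]$, where $\G_k$ is the analogous iterate from $\G$, so that the proposition can be reapplied at the following step. This is a short but careful bookkeeping exercise directly from the definitions of $\fm$ and $\fd$ and the description of $\G[C]$ in Definition~\ref{dfn:reach}; once in hand, the rest of the argument is a direct translation of the combinatorial reasoning used to establish Lemma~\ref{lem:intclos}.
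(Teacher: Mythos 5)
Your proposal is correct and follows essentially the same route as the paper: both rest on the observation that $\G' = \G[V']$ preserves every edge pointing into $V'$ (so recursive heads, tails and bidirected-connectedness transfer verbatim), the easy direction on concatenating reachability sequences, and the hard direction on the intrinsic-closure machinery of Lemma~\ref{lem:intclos}. The only (cosmetic) difference is that the paper argues by contradiction with $I_{\G'}(S)$ rather than sandwiching $I_{\G'}(H)$ between $S$ and $I_\G(H)$, but the combinatorial content---that ancestor and district relations in $\G'$ are contained in those of $\G$, so the closure cannot grow when passing to the reachable subgraph---is identical.
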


\begin{proof}
  Since $\G' = \G[V']$ is reachable from $\G$, any intrinsic set in
  $\G'$ is also an intrinsic set in $\G$.  For the converse, suppose
  that $D \subseteq V'$ is an intrinsic set in $\G$.  Take the intrinsic
  closure of $D$ in $\G'$, say $C$; if $C=D$ then we are done.

  Suppose not, so that $C \setminus D$ is non-empty.  This occurs
  precisely when $C$ is bidirected-connected in $\G^\prime$, and every vertex in
  $C \setminus D$ is an ancestor in $\G^\prime$ of some other vertex in $C$.  But if
  this is true in $\G'$ then it must also be true in $\G$, which
  contains any edges that $\G'$ does; thus the intrinsic closure of
  $D$ in $\G$ is a strict superset of $D$.  This contradicts the assumption that 
  $D$ is intrinsic in $\G$.

  By Lemma \ref{lem:welldef} the recursive heads and tails associated with each
  intrinsic set are unchanged, since the parent sets of each random
  vertex are preserved.
\end{proof}

\begin{cor} \label{cor:anccap} Let $\G$ be a CADMG containing
  random-ancestral sets $A_1, A_2$.  If $H \in \mathcal{H}(\G[A_1])$
  and $H \in \mathcal{H}(\G[A_2])$ then
  $H \in \mathcal{H}(\G[A_1 \cap A_2])$.
\end{cor}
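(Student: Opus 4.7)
The plan is to transport the common recursive head from the two random-ancestral subgraphs up to $\G$, use uniqueness of the intrinsic set associated with a recursive head to force the two intrinsic sets to coincide, and then push the resulting intrinsic set back down into $\G[A_1\cap A_2]$.

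First I would verify that $A_1\cap A_2$ is itself random-ancestral in $\G$: if $v\in A_1\cap A_2$ and $u$ is a random ancestor of $v$, then $u\in A_1$ and $u\in A_2$, since each $A_i$ is random-ancestral, whence $u\in A_1\cap A_2$. In particular $\G[A_1\cap A_2]=\fm_{A_1\cap A_2}(\G)$ is reachable from $\G$, which will be needed at the end.

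Next, by hypothesis there exist intrinsic sets $S_i$ of $\G[A_i]$ with $\rh_{\G[A_i]}(S_i)=H$ for $i=1,2$. Since $\G[A_i]$ is reachable from $\G$, Lemma \ref{lem:intpres} implies that each $S_i$ is also intrinsic in $\G$, and that the associated recursive head is unchanged, so $\rh_\G(S_1)=\rh_\G(S_2)=H$. Both $S_1$ and $S_2$ now live inside the \emph{same} graph $\G$ and carry the same recursive head, so the one-to-one correspondence between intrinsic sets and recursive heads within a fixed graph (the remark immediately following Lemma \ref{lem:intclos}) forces $S_1=S_2$; call this common intrinsic set $S$. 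In particular $S\subseteq A_1\cap A_2$.

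Finally I apply Lemma \ref{lem:intpres} in the other direction: since $\G[A_1\cap A_2]$ is reachable from $\G$ (by the first step) and $S\subseteq A_1\cap A_2$ is intrinsic in $\G$, it is intrinsic in $\G[A_1\cap A_2]$ as well, with the same recursive head $H$. Hence $H\in\mathcal{H}(\G[A_1\cap A_2])$, as required.

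The only genuinely non-routine ingredient is the uniqueness step $S_1=S_2$, and that is what Lemma \ref{lem:intclos} was set up to deliver; everything else is bookkeeping about how intrinsic sets and recursive heads behave under restriction to reachable subgraphs via Lemma \ref{lem:intpres}.
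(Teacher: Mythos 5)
Your proof is correct and follows essentially the same route as the paper's: establish that $A_1\cap A_2$ is random-ancestral (hence $\G[A_1\cap A_2]$ is reachable) and then invoke Lemma \ref{lem:intpres}, with the head-to-intrinsic-set bijection from Lemma \ref{lem:intclos} supplying the identification $S_1=S_2$. You have simply spelled out the details that the paper's two-sentence proof leaves implicit.
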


\begin{proof}
  If $A_1$ and $A_2$ are random-ancestral, then so is $A_1 \cap A_2$,
  so the graph $\G[A_1 \cap A_2]$ is reachable from $\G$.  The result
  follows from Lemma \ref{lem:intpres}.
\end{proof}

\subsection{Partitions}

We follow the approach of \citet{evans:14} by defining partitions of
sets via appropriate collections of subsets.  Define a partial
ordering $\prec$ on recursive heads by $H_1 \prec H_2$ whenever
$I_\G(H_1) \subset I_\G(H_2)$.  

\begin{dfn}
Define a function $\Phi_\G$ on sets of random vertices
$C \subseteq V$ that `picks out' the set of $\prec$-maximal recursive
heads $H \in \mathcal{H}(\G)$ that are subsets of $C$.  That is,
\begin{align*}
\Phi_\G(C) \equiv \{H \in \mathcal{H} \,|\, H \subseteq C \text{ and } H \nprec H' \text{ for all other } H' \subseteq C, H' \in \mathcal{H}\}.
\end{align*}
Define
\begin{align*}
\psi_\G(C) \equiv C \setminus \bigcup_{D \in \Phi_\G(C)} D.
\end{align*}
Now recursively define a function $\partn{\cdot}_\G$ that partitions subsets of 
$V$: define $\partn{\emptyset}_\G = \emptyset$, and 
%
\begin{align*}
\partn{W}_\G \equiv \Phi_\G(W) \cup \partn{\psi_\G(W)}_\G.
\end{align*}
\end{dfn}

For full details, including a proof that this definition does
indeed define a partition, see the Appendix B.  

\begin{exm} \label{exm:vermahead}
  The recursive heads of the graph in Figure \ref{fig:verma}(b) are
  $\{X\}$, $\{E\}$, $\{M\}$, $\{Y\}$, $\{E,Y\}$, and the ordering requires
  that $\{E\}$ and $\{Y\}$ precede $\{E,Y\}$.  Hence, for example
  \begin{align*}
\partn{\{X,E,Y\}}_\G &= \{\{X\}, \{E, Y\}\},\\
\partn{\{M,Y\}}_\G &= \{\{M\}, \{Y\}\}.
  \end{align*}
The partitioning function $[ \cdot]_\G$ 
in \citet{evans:14} made use of `heads' rather than `recursive heads',
and therefore the partition obtained differs from the one here.  For example,
applied to the same graph as above, 
  \begin{align*}
[\{X,E,Y\}]_\G &= \{\{X\}, \{E\}, \{Y\}\}.
  \end{align*}
\end{exm}

\begin{lem} \label{lem:headpreserve}
  If $\G' = \G[D]$ is reachable from $\G$ then
  $\partn{C}_{\G'} = \partn{C}_\G$ for every $C \subseteq D$.
\end{lem}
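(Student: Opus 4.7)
The plan is to reduce the claim to the fact, already provided by Lemma~\ref{lem:intpres}, that the relevant combinatorial objects (intrinsic sets, recursive heads, tails, and hence the ordering $\prec$) are preserved when we pass from $\G$ to a reachable $\G' = \G[D]$. Once that invariance is in place, the claim follows by induction on $|C|$ using the recursive definition of $\partn{\cdot}_\G$.

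First, I would establish the following intermediate claim: for every $C \subseteq D$,
\begin{align*}
\Phi_{\G'}(C) = \Phi_\G(C), \qquad \psi_{\G'}(C) = \psi_\G(C).
\end{align*}
For the first equality, note that $\Phi_\G(C)$ depends only on (i) which subsets of $C$ are recursive heads, and (ii) the restriction of $\prec$ to those heads. Since $C \subseteq D$, Lemma~\ref{lem:intpres} tells us that the recursive heads of $\G'$ contained in $C$ are exactly the recursive heads of $\G$ contained in $C$. For (ii), if $H_1, H_2 \subseteq C \subseteq D$ are recursive heads, then by Lemma~\ref{lem:intpres} the intrinsic set whose recursive head is $H_i$ is the same set $S_i$ in both $\G$ and $\G'$, i.e.\ $I_{\G'}(H_i) = I_\G(H_i)$. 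Hence $H_1 \prec H_2$ in $\G$ iff $H_1 \prec H_2$ in $\G'$, so the notion of $\prec$-maximality coincides and $\Phi_{\G'}(C) = \Phi_\G(C)$. The equality $\psi_{\G'}(C) = \psi_\G(C)$ then follows directly from its definition.

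Next, I would complete the argument by induction on $|C|$. For the base case $C = \emptyset$, both sides of the claimed equation equal $\emptyset$. For the inductive step, note that $\psi_\G(C) \subsetneq C$ whenever $C$ is non-empty: indeed, by Lemma~\ref{lem:singlehead} every singleton $\{v\} \subseteq C$ is a recursive head, so $\Phi_\G(C)$ is non-empty and removes at least one vertex. Since $\psi_\G(C) \subseteq C \subseteq D$, the inductive hypothesis applies to give $\partn{\psi_\G(C)}_{\G'} = \partn{\psi_\G(C)}_\G$, and then unfolding the recursion yields
\begin{align*}
\partn{C}_{\G'} = \Phi_{\G'}(C) \cup \partn{\psi_{\G'}(C)}_{\G'} = \Phi_\G(C) \cup \partn{\psi_\G(C)}_\G = \partn{C}_\G,
\end{align*}
as required.

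The only substantive step is the intermediate claim, and even that is essentially a bookkeeping exercise built on Lemma~\ref{lem:intpres}; the one potential subtlety is verifying that the partial order $\prec$ depends only on the intrinsic closures of recursive heads (not on any ambient graphical structure outside $D$), so that the restriction of $\prec$ to heads contained in $D$ is literally the same relation in $\G$ and $\G'$. Given that $\prec$ is defined purely through $I_\G(\cdot)$ applied to recursive heads, and Lemma~\ref{lem:intpres} transports both recursive heads and their intrinsic sets, this causes no real difficulty.
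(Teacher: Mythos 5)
Your proof is correct and follows essentially the same route as the paper: invoke Lemma~\ref{lem:intpres} to transport the recursive heads and their intrinsic sets (hence the ordering $\prec$) from $\G$ to $\G'$, and then observe that the recursive definition of $\partn{\cdot}$ depends only on these data. The paper states this in two sentences; you have simply made the induction on $|C|$ and the termination argument via Lemma~\ref{lem:singlehead} explicit, which is a faithful elaboration rather than a different approach.
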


\begin{proof}
  By Lemma \ref{lem:intpres}, the intrinsic sets of $\G' = \G[D]$ are
  precisely the intrinsic sets of $\G$ that are subsets of $D$, with
  the same associated recursive heads.  Hence the result follows from
  the definition of $\prec$.
\end{proof}

\begin{lem} \label{lem:distpart}
If $\G$ is such that $V = D_1 \dot\cup D_2$ for sets $D_1, D_2$ not 
connected by bidirected edges, then
\begin{align*}
\partn{C}_\G = \partn{C \cap D_1}_\G \cup \partn{C \cap D_2}_\G.
\end{align*}
\end{lem}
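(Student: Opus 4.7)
The plan is to exploit the fact that intrinsic sets are bidirected-connected by definition, so the hypothesis that no bidirected edge connects $D_1$ and $D_2$ forces every intrinsic set of $\G$ to lie entirely within $D_1$ or entirely within $D_2$. Indeed, if an intrinsic set $S$ met both $D_1$ and $D_2$ then, being bidirected-connected, it would furnish a bidirected path from some vertex in $D_1$ to some vertex in $D_2$, contradicting the assumption. In particular, for every recursive head $H \in \mathcal{H}(\G)$, both $H$ and its intrinsic closure $I_\G(H)$ lie entirely in one of the two blocks, so whenever $H_1 \prec H_2$ (i.e.\ $I_\G(H_1) \subset I_\G(H_2)$) the two heads must sit in the same block.

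From this, the first step is to prove that
\[
\Phi_\G(C) \;=\; \Phi_\G(C \cap D_1) \,\cup\, \Phi_\G(C \cap D_2).
\]
For the inclusion $\supseteq$, any $H \in \Phi_\G(C \cap D_i)$ is a head contained in $C$, and any $H' \in \mathcal{H}$ with $H \prec H'$ and $H' \subseteq C$ must be contained in the same block $D_i$ (by the previous paragraph), hence $H' \subseteq C \cap D_i$, contradicting the maximality of $H$ within $C \cap D_i$; so $H \in \Phi_\G(C)$. For $\subseteq$, any $H \in \Phi_\G(C)$ lies entirely in one block $D_i$, hence $H \subseteq C \cap D_i$, and its $\prec$-maximality in $C$ implies $\prec$-maximality in the smaller set $C \cap D_i$. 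Taking set complements within $C$ immediately gives
\[
\psi_\G(C) \;=\; \psi_\G(C \cap D_1) \,\cup\, \psi_\G(C \cap D_2),
\]
where the two pieces on the right are disjoint because they are contained in $D_1$ and $D_2$ respectively.

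Finally, I would finish by induction on $|C|$. The base case $C = \emptyset$ is immediate from $\partn{\emptyset}_\G = \emptyset$. For the inductive step, Lemma \ref{lem:singlehead} guarantees every singleton is a recursive head, so if $C \neq \emptyset$ then $\Phi_\G(C)$ is non-empty and $\psi_\G(C) \subsetneq C$; thus the induction hypothesis applies to $\psi_\G(C)$. Combining the recursive definition
\[
\partn{C}_\G = \Phi_\G(C) \,\cup\, \partn{\psi_\G(C)}_\G
\]
with the two displayed identities above, and applying the induction hypothesis separately to $\psi_\G(C \cap D_1)$ and $\psi_\G(C \cap D_2)$, yields
\[
\partn{C}_\G = \Phi_\G(C \cap D_1) \cup \partn{\psi_\G(C \cap D_1)}_\G \cup \Phi_\G(C \cap D_2) \cup \partn{\psi_\G(C \cap D_2)}_\G,
\]
which is exactly $\partn{C \cap D_1}_\G \cup \partn{C \cap D_2}_\G$.

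The main obstacle is really just the initial observation about bidirected-connectedness; once that is in hand, the rest is bookkeeping. A mild subtlety is verifying that $\prec$-maximality transfers cleanly between $C$ and $C \cap D_i$, but this follows immediately from the fact that any head $H'$ comparable to $H$ under $\prec$ is forced into the same block as $H$.
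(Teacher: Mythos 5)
Your proof is correct. The paper itself disposes of this lemma in one line: it makes the same opening observation that every intrinsic set, being bidirected-connected, lies entirely within $D_1$ or $D_2$, and then invokes the general partition machinery of Appendix B (Propositions \ref{prop:part} and \ref{prop:rec-head-suitable}, the latter verifying that $\prec$ is ``partition suitable''). Your argument is a self-contained alternative that bypasses that machinery by exploiting a stronger structural fact specific to this ordering: since $H_1 \prec H_2$ means $I_\G(H_1) \subset I_\G(H_2)$ and intrinsic closures respect the block structure, any two $\prec$-comparable heads must lie in the same block. This lets you prove the exact identity $\Phi_\G(C) = \Phi_\G(C \cap D_1) \cup \Phi_\G(C \cap D_2)$ directly, whereas the paper's general Proposition \ref{prop:part} only obtains the containment $\mathcal{C}_i \subseteq \Phi(W \cap D_i)$ and must patch the discrepancy using Proposition \ref{prop:rmv}; the longer route is unavoidable there because an arbitrary partition-suitable partial order may relate heads sitting in different blocks. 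So your approach buys a shorter, special-purpose proof at the cost of not reusing the general lemma (which the paper also needs elsewhere, e.g.\ implicitly for Lemma \ref{lem:headpreserve}-style arguments). One small wording issue: in the final step the induction hypothesis should be applied to $\psi_\G(C)$ as a whole---your $\psi$-identity shows it equals the disjoint union $\psi_\G(C\cap D_1) \,\dot\cup\, \psi_\G(C\cap D_2)$, and the hypothesis then splits $\partn{\psi_\G(C)}_\G$ into $\partn{\psi_\G(C\cap D_1)}_\G \cup \partn{\psi_\G(C\cap D_2)}_\G$---rather than ``separately'' to the two pieces, to which the hypothesis applies only vacuously; the conclusion is unaffected.
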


\begin{proof}
  Since every intrinsic set (and therefore recursive head) is a subset
  of either $D_1$ or $D_2$, the result follows from Propositions
  \ref{prop:part} and \ref{prop:rec-head-suitable} in the Appendix.
\end{proof}

\section{Parameterization} \label{sec:main}

We are now in a position to introduce the parameterization.  Recall
that $T$ denotes the tail associated with a recursive head $H$.  We
will present the parameterization for binary variables only, i.e.\ those with 
state-space $\X_v \equiv \{0,1\}$, each $v \in V\dot\cup W$; the extension
to non-binary discrete variables is conceptually simple but notationally
cumbersome.  Appendix \ref{sec:disc} contains notes on the general case.

\begin{dfn}
  Let $\G$ be a CADMG with random vertices $V$ and fixed vertices $W$.
  We say that $p_{V|W}$ is \emph{parameterized according to $\G$}, and
  write $p_{V|W} \in \PA(\G)$, if it can be written in the form:
\begin{align}
\label{eqn:param}
p_{V|W}(x_V \,|\, x_W) &= 
\sum_{C:O \subseteq C \subseteq V} (-1)^{|C \setminus O|} \prod_{H \in \partn{C}_\G} q_H(x_T), && x_{VW} \in \X_{VW},
\end{align}
where we define $O \equiv O(x_V) \equiv \{v \in V \,|\, x_v = 0\}$.  Here
$q_H(x_T) \in \reals$ for each $H \in \mathcal{H}$, $x_T \in \X_{T}$, 
and $T \equiv T(H)$ is the tail associated with the recursive head $H$.
\end{dfn}

Note that if $C = \emptyset$ then the product is empty, which we 
define to be equal to 1.
It will be shown in Section \ref{sec:smooth} that if $p_{V|W}$ is of
the above form then $q_H(x_T) \in [0,1]$ for all $H$ and $x_T$, or can 
be chosen to be so.  In
fact, if the graph is interpreted causally, then each $q_H(x_T)$ is
the same as $p_{H|T}(0_H \,|\, \Do(x_T))$.  

\subsection{Comparison to Other Graphical Parameterizations}

It is worth remarking on some special cases of the parameterization: 
if $\G$ is a DAG then each $H$ is a singleton
$\{h\}$, and (\ref{eqn:param}) is just the familiar parameterization
in terms of conditional probability tables using corner-point
identifiability constraints:
$q_H(x_T) = p_{h|\pa(h)}(0_{h} \,|\, x_{\pa(h)})$.  
If $\G$ has only bidirected edges then $T = \emptyset$, and 
(\ref{eqn:param}) reduces
to the parameterization given in \citet{drton:richardson:08}.  If $\G$ 
has a chain graph structure, i.e.\ the districts can be ordered so
that $v \rightarrow w$ only if $v$'s district is strictly before $w$'s, 
then the parameterization reduces
to that given in \citet{drton:09}.  

A comparison with the parameterization of \citet{evans:14} is more
subtle.  Since the ordinary Markov models in that paper only use the weaker 
requirement (i') (see Section \ref{sec:nested}) we would expect 
that they generally have a larger dimension than the nested model 
for the same graph, and therefore use a different parameterization.  
If the models are the same, and if each intrinsic set can be obtained
from a single marginalization step followed by factorization, then
the `ordinary' heads and tails will be the same as the recursive
heads and tails, and hence the parameterization will be identical. 

However, even if the ordinary and nested models 
are the same, the parameterizations can be different.  Consider the graph in 
Figure \ref{fig:16not18} (a modified version of $\mathcal{L}$).  
In this case the ordinary and nested models are the same and both 
represent the distributions for which $X_5 \indep X_3 \,|\, X_2$ and 
$X_4 \indep X_2 \,|\, X_3$; this is
the same as the corresponding maximal ancestral graph
model.  Since the set $\{2,4,5\}$ is a recursive head 
the nested parameterization includes 
the quantity $q_{245}(x_3) = P(X_2 = X_4 = X_5 = 0 \mid \Do(x_3))$ 
(see Theorem \ref{thm:recover}), whereas the ordinary 
parameterization does not have such a head, and 
uses only ordinary conditional probabilities
such as $P(X_4 = 0, X_5 = 0 \mid x_2, x_3)$. 

In general, the number of parameters in the nested model is no 
greater than the number in the ordinary Markov model, though this
number can be quite large even for sparse graphs if the districts 
are large.  The number of parameters for a particular district will 
be at least quadratic in the district size, this most parsimonious
case occurring if the district is a single chain.  The number of 
parameters may grow exponentially in the number of vertices, 
even for models with only a linear number of edges: for example 
if we have a `star' graph with all bidirected edges (this is equivalent
to a star-shaped DAG with all edges pointing to the central
node).  Such large models are potentially undesirable, and methods
to reduce the parameter count are suggested by \citet{shpitser:13}.

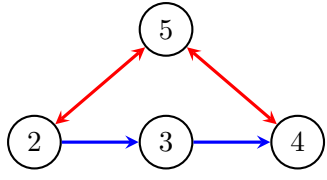
\begin{figure}
\begin{center}
\begin{tikzpicture}
[>=stealth, node distance=17.5mm]
\pgfsetarrows{latex-latex};
\begin{scope}[xshift=8cm]
 \node[rv] (2) {2};
 \node[rv, right of=2] (3) {3};
 \node[rv, right of=3] (4) {4};
 \node[rv, above of=3, yshift=-2.5mm] (5) {5};
 \draw[da] (2) -- (3);
 \draw[da] (3) -- (4);
 \draw[ba] (4) -- (5);
 \draw[ba] (2) -- (5);
\end{scope}
\end{tikzpicture}
\caption{An ADMG whose nested and ordinary Markov models are the same, but for which
the parameterizations of \citet{evans:14} and this paper are
distinct.}
\label{fig:16not18}
\end{center}
\end{figure}

\subsection{Main Results}

We will show that distributions are parameterized according to $\G$
precisely when they recursively factorize according to $\G$, so that
in fact $\RF(\G) = \PA(\G)$.  In particular, a distribution of the
form (\ref{eqn:param}) satisfies properties (i) and (ii) of the
recursive factorization.  This is shown by the next two lemmas.

\begin{lem} \label{lem:distfact} Let $\G$ be a CADMG with random
  vertices $V=D_1 \dot\cup \cdots \dot\cup D_l$, such that for
  $i \neq j$ there is no bidirected edge in $\G$ 
  from a vertex in $D_i$ to a vertex in $D_j$.  Then for all $x_{VW} \in \X_{VW}$ 
  and $O \equiv O(x_V)$,
\begin{align*}
  \sum_{O \subseteq C \subseteq V} (-1)^{|C \setminus O|} \prod_{H
    \in \partn{C}_\G} q_H(x_T) &= \prod_{i=1}^l \sum_{O_i \subseteq C \subseteq
    D_i} (-1)^{|C \setminus O_i|} \prod_{H \in \partn{C}_\G} q_H(x_T), 
\end{align*}
where $O_i = O \cap D_i$.
\end{lem}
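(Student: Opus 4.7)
The plan is to reduce the identity to a direct application of Lemma \ref{lem:distpart} followed by the distributive law. The key observation is that under the hypothesis of the lemma, all three ingredients in the summand---the index set of summation, the sign $(-1)^{|C \setminus O|}$, and the product over $\partn{C}_\G$---decompose in a compatible way across the districts $D_1, \ldots, D_l$.

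First, I would write any $C$ with $O \subseteq C \subseteq V$ uniquely as a disjoint union $C = C_1 \dot\cup \cdots \dot\cup C_l$ with $C_i \equiv C \cap D_i$, and observe that the constraints $O_i \subseteq C_i \subseteq D_i$ (with $O_i \equiv O \cap D_i$) characterize exactly those decompositions arising this way. This turns a single sum over $C$ into independent sums over the $C_i$'s. Second, since the $D_i$ are disjoint, $|C \setminus O| = \sum_i |C_i \setminus O_i|$, so the sign factors as $(-1)^{|C \setminus O|} = \prod_i (-1)^{|C_i \setminus O_i|}$. Third, by Lemma \ref{lem:distpart} applied iteratively (or in its stated multi-set form), the partition satisfies $\partn{C}_\G = \bigcup_i \partn{C_i}_\G$, and these pieces are pairwise disjoint because every recursive head is a subset of some single $D_j$. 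Hence
\[
\prod_{H \in \partn{C}_\G} q_H(x_T) = \prod_{i=1}^l \prod_{H \in \partn{C_i}_\G} q_H(x_T).
\]

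With these three decompositions in hand, the summand is a product of factors indexed by $i$, each of which depends only on $C_i$. The final step is to interchange sum and product: a standard identity of the form $\sum_{C_1, \ldots, C_l} \prod_i f_i(C_i) = \prod_i \sum_{C_i} f_i(C_i)$, valid because the index sets $\{C_i : O_i \subseteq C_i \subseteq D_i\}$ are independent. Applying this interchange delivers exactly the claimed product on the right-hand side.

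I do not anticipate a serious obstacle: Lemma \ref{lem:distpart} does all the graph-theoretic work, and everything else is bookkeeping. The only subtle point worth stating carefully is why $\partn{C_1}_\G, \ldots, \partn{C_l}_\G$ are disjoint collections (so that the product over $\partn{C}_\G$ genuinely splits with no double-counting); this follows because any $H \in \partn{C_i}_\G \cap \partn{C_j}_\G$ would be a recursive head intersecting two distinct $D_i$'s, contradicting the fact that recursive heads lie inside intrinsic (and hence bidirected-connected) sets, each of which is confined to a single $D_i$ by hypothesis.
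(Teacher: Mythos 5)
Your proposal is correct and follows essentially the same route as the paper: both arguments rest on Lemma \ref{lem:distpart} to split the product over $\partn{C}_\G$, on the additivity $|C\setminus O| = \sum_i |C_i \setminus O_i|$ for the sign, and on the bijection $C \leftrightarrow (C_1,\dots,C_l)$ to factor the sum. The only cosmetic difference is that the paper proves the $l=2$ case and then inducts, while you handle all $l$ districts simultaneously; your closing remark on the disjointness of the collections $\partn{C_i}_\G$ is a correct and slightly more explicit justification of a point the paper leaves implicit.
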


\begin{proof}
  We prove the result for $l=2$, from which the general result follows
  by induction.  From Lemma \ref{lem:distpart}
\begin{align*}
\prod_{H \in \partn{C}_\G} q_H(x_T) = \prod_{H \in \partn{C \cap D_1}_\G} q_H(x_T) \times \prod_{H \in \partn{C \cap D_2}_\G} q_H(x_T).
\end{align*}
In addition if $C_i = C \cap D_i$, then $C \setminus O = (C_1
\setminus O_1) \cup (C_2 \setminus O_2)$ and this is the union of two
disjoint sets, so $|C \setminus O| = |C_1 \setminus O_1|+|C_2
\setminus O_2|$.  Hence
\begin{align*}
  \sum_{O \subseteq C \subseteq V} (-1)^{|C \setminus O|} \prod_{H \in \partn{C}_\G} q_H(x_T) 
  &=   \sum_{O \subseteq C \subseteq D_1 \cup D_2} (-1)^{|C \setminus O|} \prod_{H \in \partn{C \cap D_1}_\G} q_H(x_T) \prod_{H \in \partn{C \cap D_2}_\G} q_H(x_T)\\
  &=   \sum_{O_1 \subseteq C_1 \subseteq D_1} (-1)^{|C_1 \setminus O_1|} \prod_{H \in \partn{C_1}_\G} q_H(x_T)\\ 
  & \qquad \times \sum_{O_2 \subseteq C_2 \subseteq D_2} (-1)^{|C_2 \setminus O_2|} \prod_{H \in \partn{C_2}_\G} q_H(x_T).
\end{align*}
\end{proof}

\begin{lem} \label{lem:marfact} Let $\G$ be a CADMG with a 
random vertex $v$.
Then for all $x_{VW} \in \X_{VW}$ and $O \equiv O(x_V)$,
\begin{align*}
\lefteqn{\sum_{O \subseteq C \subseteq V} (-1)^{|C \setminus O|} \prod_{H
    \in \partn{C}_\G} q_H(x_T)}\\
&=  
      \sum_{O \subseteq C \subseteq V \setminus \{v\}} (-1)^{|C \setminus O|} \prod_{H
    \in \partn{C}_\G} q_H(x_T) 
    -\sum_{O \cup \{v\} \subseteq C \subseteq V} (-1)^{|C \setminus (O \cup \{v\})|} \prod_{H
    \in \partn{C}_\G} q_H(x_T).
\end{align*}
\end{lem}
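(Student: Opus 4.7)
The identity is a purely combinatorial splitting of the index set of the sum on the left-hand side: since the factor $\prod_{H \in \partn{C}_\G} q_H(x_T)$ depends only on $C$ for a fixed assignment $x_{VW}$, the parameters themselves play no role beyond labelling each summand, and no graphical property of $\G$ needs to be invoked.

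The plan is to partition the collection $\{C : O \subseteq C \subseteq V\}$ according to whether $v \notin C$ or $v \in C$. The first sub-collection is precisely $\{C : O \subseteq C \subseteq V \setminus \{v\}\}$, which is non-empty exactly when $v \notin O$; on this piece the exponent $|C \setminus O|$ is unchanged, so these terms recover the first sum on the right-hand side verbatim. The second sub-collection is $\{C : O \cup \{v\} \subseteq C \subseteq V\}$; assuming $v \notin O$ we have $v \in C \setminus O$, so $|C \setminus O| = |C \setminus (O \cup \{v\})| + 1$, and hence
\[
(-1)^{|C \setminus O|} = -\,(-1)^{|C \setminus (O \cup \{v\})|}.
\]
This accounts exactly for the explicit minus sign in front of the second sum on the right-hand side.

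Summing the two contributions reconstructs the left-hand side, proving the identity. There is no real obstacle: the argument is a couple of lines of sign-bookkeeping of exactly the inclusion--exclusion flavour that already arose in Lemma \ref{lem:distfact}, with the $q_H$ factors passing through as passive summand weights. The only point requiring care is the rewriting of $|C \setminus O|$ in terms of $|C \setminus (O \cup \{v\})|$, which pivots on the fact that $v$ lies in $C$ but not in $O$.
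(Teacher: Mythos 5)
Your proof is correct and essentially identical to the paper's: both split the sum over $C$ according to whether $v \in C$ and then absorb a factor of $-1$ into the second piece using $|C \setminus O| = |C \setminus (O \cup \{v\})| + 1$. You are in fact slightly more explicit than the paper in noting that this sign flip requires $v \notin O$ (i.e.\ $x_v = 1$), which is an implicit hypothesis of the lemma as stated and is the only case in which it is later applied.
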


\begin{proof}
Separating the sum into those subsets $C$ that contain $v$ and those which do 
not gives
\begin{align*}
\lefteqn{\sum_{O \subseteq C \subseteq V} (-1)^{|C \setminus O|} \prod_{H
    \in \partn{C}_\G} q_H(x_T)}\\
&=  
      \sum_{O \subseteq C \subseteq V \setminus \{v\}} (-1)^{|C \setminus O|} \prod_{H
    \in \partn{C}_\G} q_H(x_T) 
    + \sum_{O \cup \{v\} \subseteq C \subseteq V} (-1)^{|C \setminus O|} \prod_{H
    \in \partn{C}_\G} q_H(x_T),
\end{align*}
which is seen to be the same as the given expression by including a factor of
$-1$ inside and outside the second sum. 
\end{proof}

We now move to the main result of the paper.

\begin{thm} \label{thm:main}
The kernel $p_{V|W}$ recursively factorizes according to $\G$ if and only if it
  is parameterized according to $\G$.
\end{thm}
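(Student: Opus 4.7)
The plan is to prove both inclusions $\PA(\G) \subseteq \RF(\G)$ and $\RF(\G) \subseteq \PA(\G)$ by strong induction on $|V|$. The base case $|V|=1$ is immediate from Lemma \ref{lem:singlehead}: the singleton $\{v\}$ is its own recursive head, and (\ref{eqn:param}) reduces to $p_{V|W}(0_v \mid x_W) = q_{\{v\}}(x_W)$ and $p_{V|W}(1_v \mid x_W) = 1 - q_{\{v\}}(x_W)$, which places no restriction on $p_{V|W}$ as long as $q_{\{v\}}(x_W) \in [0,1]$, matching the unrestricted $\RF$ base case.

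For $\PA(\G) \subseteq \RF(\G)$, suppose $p_{V|W}$ has the form (\ref{eqn:param}). To verify condition (i) when $\G$ has districts $D_1,\ldots,D_k$, define each $r_i(x_{D_i}\mid x_{\pa_\G(D_i)\setminus D_i})$ by restricting (\ref{eqn:param}) to subsets $C \subseteq D_i$ and parameters $q_H$ with $H \subseteq D_i$. Combining Lemma \ref{lem:distpart} (partitions split across districts) with Lemma \ref{lem:distfact} (the alternating sum factors across districts) gives $p_{V|W} = \prod_i r_i$, and each $r_i$ is itself of the form (\ref{eqn:param}) for $\G[D_i]$, so when $k \geq 2$ the inductive hypothesis yields $r_i \in \RF(\G[D_i])$. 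To verify condition (ii), for an ancestral $A$ with $V \setminus A \neq \emptyset$, iteratively marginalize a sterile vertex $v \in V \setminus A$ using Lemma \ref{lem:marfact}; the two signed contributions combine so that parameters $q_H$ with $v \in H$ cancel against each other, and the residue matches the parameterization on $\G[V \setminus \{v\}]$ by Lemmas \ref{lem:intpres} and \ref{lem:headpreserve}. Iterating down to $V \cap A$ produces a parameterization of the marginal according to $\G[V \cap A]$, and the induction hypothesis delivers membership in $\RF(\G[V \cap A])$.

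For the reverse inclusion $\RF(\G) \subseteq \PA(\G)$, suppose $p_{V|W} \in \RF(\G)$. If $\G$ has $k \geq 2$ districts, then (i) gives $p_{V|W} = \prod_i r_i$ with $r_i \in \RF(\G[D_i])$; the inductive hypothesis yields $r_i \in \PA(\G[D_i])$, and Lemmas \ref{lem:distpart} and \ref{lem:distfact} reassemble the factor parameterizations into a single parameterization of $p_{V|W}$ according to $\G$. If $\G$ has a single district, then (i) is vacuous and we must exploit (ii): pick any sterile $v \in V$, so that $V \setminus \{v\}$ is random-ancestral. The marginal $p_{V \setminus \{v\}|W}$ lies in $\RF(\G[V \setminus \{v\}])$, hence in $\PA(\G[V \setminus \{v\}])$ by induction, supplying parameters $q_H$ for all recursive heads $H \subseteq V \setminus \{v\}$. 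The remaining parameters --- those for heads $H$ containing $v$ --- are extracted from $p_{V|W}$ by Möbius inversion of (\ref{eqn:param}) evaluated at arguments $x_V = (0_C, 1_{V \setminus C})$ with $v \in C$, using the intrinsic closure $I_\G(H)$ (Lemma \ref{lem:intclos}) to delimit the relevant range of summation. One must then check that substituting the full collection $\{q_H\}$ back into (\ref{eqn:param}) reproduces $p_{V|W}$ on every argument.

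The principal obstacle is this single-district case: condition (i) gives no structural information and (ii) only constrains a proper marginal, so parameters attached to heads of size $>1$ containing $v$ must be recovered from $p_{V|W}$ itself rather than from a recursive factorization on a smaller graph. The crux is to show that the Möbius inversion is well-posed --- that the same value of $q_H(x_T)$ emerges no matter which slice of $p_{V|W}$ we use to solve for it, and that the parameters so obtained agree on the margin $V \setminus \{v\}$ with the ones delivered by the inductive hypothesis. This requires careful bookkeeping with the correspondence of intrinsic sets between $\G$ and its reachable subgraphs (Lemma \ref{lem:intpres}), together with Lemma \ref{lem:distfact} applied within the intrinsic closure of each multi-vertex head, to connect the defining expression for $q_H$ to a consistent alternating sum of probabilities.
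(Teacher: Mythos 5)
Your base case, your $\PA(\G)\subseteq\RF(\G)$ direction, and your multi-district case of $\RF(\G)\subseteq\PA(\G)$ all match the paper's argument (induction on $|V|$, Lemmas \ref{lem:distfact}, \ref{lem:distpart} and \ref{lem:marfact}). The genuine gap is in the single-district case of $\RF(\G)\subseteq\PA(\G)$: you correctly flag it as ``the principal obstacle'' and ``the crux,'' but you then only describe what would need to be checked (that a M\"obius inversion for all heads containing a chosen sterile $v$ is well-posed and consistent with the margin) without supplying the argument. That is precisely the content of the theorem in this case, so the proof is incomplete where it matters most.

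The missing idea is structural. When $\G$ has a single district, $V$ itself is an intrinsic set with recursive head $H^*=\sterile_\G(V)$ and tail $T^*=(V\cup W)\setminus H^*$, and one can show (via Lemma \ref{lem:intclos}) that \emph{every} recursive head other than $H^*$ has its intrinsic set contained in $V\setminus\{h\}$ for some $h\in H^*$. So instead of fixing one sterile $v$ and inverting for the many heads containing it, one ranges over \emph{all} the margins $V\setminus\{h\}$, $h\in H^*$, obtains every parameter except $q_{H^*}(x_{T^*})$ from the induction hypothesis, and settles cross-margin consistency by Corollary \ref{cor:anccap} (agreement on the smallest common margin $\an(H)$) together with a strengthened induction hypothesis tracking when a parameter is uniquely recoverable versus arbitrary --- a dichotomy your proposal never addresses and which is needed because the kernels need not be strictly positive. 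The single remaining parameter requires no inversion at all: one \emph{defines} $q_{H^*}(x_{T^*})$ as a version of the ordinary conditional probability $p_{H^*|T^*}(0_{H^*}\,|\,x_{T^*})$, so that
\begin{align*}
p_{V|W}(0_{H^*},x_{V\setminus H^*}\,|\,x_W)=q_{H^*}(x_{T^*})\cdot p(x_{V\setminus H^*}\,|\,x_W),
\end{align*}
expands the second factor by the induction hypothesis applied to the random-ancestral margin $V\setminus H^*$, and then extends from $x_{H^*}=0_{H^*}$ to general $x_{H^*}$ by a second induction using Lemma \ref{lem:marfact}. Without this identification of $H^*$ as the unique new head, and without the positivity/arbitrariness bookkeeping, your sketch does not close.
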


\begin{proof}
  Throughout the proof we will write the partitions of vertices in a
  CADMG as $\partn{\cdot}_\G$ regardless of which graph we are dealing
  with; since all the graphs we consider are reachable from $\G$, this
  is justified by Lemma \ref{lem:headpreserve}.

  We proceed by induction on the size of $V$.  If $V = \{v\}$ then
  recursive factorization is by definition, so the condition
  holds for any distribution.  On the other hand, parameterization
  entails
  \begin{align}
  p_{v|W}(0_v \,|\, x_W) &= q_v(x_{\pa(v)}),
  &p_{v|W}(1_v \,|\, x_W) &= 1 - q_v(x_{\pa(v)}), \label{eqn:elem}
  \end{align}
  which follows from setting $q_v(x_{\pa(v)}) = p_{v|W}(0_v \,|\,
  x_W)$ and the fact that $p_{v|W}(0_v \,|\, x_W) + p_{v|W}(1_v \,|\, x_W) = 1$
  because $p_{v|W}$ is a probability distribution; hence parameterization
  also holds for any distribution with one random variable.

\medskip

($\Leftarrow$)  Now consider a general $V$ and suppose $p_{V|W}$ is parameterized
  according to $\G$.  If $\G$ has multiple districts then, by Lemma
  \ref{lem:distfact}, the kernel factorizes into pieces which are parameterized
  according to $\G[D_i]$, and so by the
  induction hypothesis recursively factorize according to $\G[D_i]$.

  Otherwise take any $a \in \sterile_\G(V)$, and fix $x_{VW\setminus \{a\}} \in \X_{VW \setminus \{a\}}$; 
  let $O = \{v \in V \setminus \{a\} \,|\, x_v = 0\}$, so then
\begin{align*}
\sum_{x_a} p(x_V \,|\, x_W) &= p(x_{V\setminus a}, 0_a \,|\, x_W) + p(x_{V\setminus a}, 1_a \,|\, x_W)\\
&=\sum_{O \cup \{a\} \subseteq C \subseteq V} (-1)^{|C \setminus (O \cup \{a\})|} \prod_{H \in \partn{C}_\G} q_H(x_T) 
+ \sum_{O \subseteq C \subseteq V} (-1)^{|C \setminus O|} \prod_{H \in \partn{C}_\G} q_H(x_T)\\
&= \sum_{O\subseteq C \subseteq V \setminus \{a\}} (-1)^{|C \setminus O|} \prod_{H \in \partn{C}_{\G}} q_H(x_T)
\end{align*}
by Lemma \ref{lem:marfact}.
By the induction hypothesis this last expression recursively factorizes according
to $\G[V \setminus \{a\}] = \fm_{V \setminus a}(\G)$, and this
extends easily to any random-ancestral margin $V \setminus B$ by sequentially
marginalizing the variables in $B$.  Hence
$p_{V|W}$ obeys properties (i) and (ii) of recursive factorization,
and therefore recursively factorizes according to $\G$.

\medskip

($\Rightarrow$) Conversely, suppose that $p_{V|W}$ recursively factorizes according to
$\G$.  In this direction we will strengthen the induction hypothesis
slightly and show that if $p_{V|W}$ recursively factorizes according
to $\G$ then $p_{V|W}$ is parameterized according to $\G$, \emph{and}
that for each parameter $q_H(x_T)$, either
$p_{T\setminus W|W}(x_{T\setminus W} \,|\, x_{T \cap W}, y_{W
  \setminus T}) > 0$
for some $y_{W \setminus T}$, in which case $q_H(x_T)$ is uniquely
recoverable from $p_{V|W}$; or
$p_{T\setminus W|W}(x_{T\setminus W} \,|\, x_{T \cap W}, y_{W
  \setminus T}) = 0$
for all $y_{W \setminus T}$, in which case $q_H(x_T)$ can take any
value.
For the base case with $|V|=1$ the result follows from the
derivation in (\ref{eqn:elem}).

If $\G$ has multiple districts then, by definition, $p_{V|W}$
factorizes into pieces which themselves recursively factorize
according to the districts $\G[D_i]$, and by the induction hypothesis
each factor is parameterized according to $\G[D_i]$.  Applying Lemma
\ref{lem:distfact} it follows that $p_{V|W}$ is parameterized
according to $\G$, and no parameters are shared between these factors
by Lemma \ref{lem:distpart}.

For uniqueness of $q_H(x_T)$, note that this parameter only
appears in the expansion for probabilities $p_{V|W}(x_V \,|\, x_W)$ (i.e.\ those
indexed by the same values $x_T$).  
If $p_{T \setminus W|W}(x_{T\setminus W}\,|\, x_W) > 0$ 
then the factorization of $p_{V|W}$ is
unique for these values by Proposition \ref{prop:gform}, and each
factor is also positive for those values of $x_T$ by Lemma
\ref{lem:pos}; thus $q_H(x_T)$ is uniquely recoverable from that
factor by the strengthened induction hypothesis.

If $p_{T\setminus W|W}(x_{T\setminus W} \,|\, x_W) = 0$ then by Lemma
\ref{lem:pos} there is some $t \in T \setminus W$ and $x_{V \setminus T}$ such
that every version of
$p_{t | \pre_<(t),W}(x_t \,|\, x_{\pre_<(t)}, x_W) = 0$.
We split into two cases: either $t$ is in the same district as $H$, or
not; let $D_1$ be the district containing $H$, and the associated kernel $r_1$.  If $t$
is in $D_1$  then it follows from Proposition
\ref{prop:gform} that
$r_1(x_{T \cap D_1} \,|\, x_{T \setminus D_1}) = 0$, and so by the
induction hypothesis applied to $\G[D_1]$ we get that $q_H(x_T)$ can
take any value.  Otherwise if $t$ is in a different district (say
$D_2$) then it follows from Proposition \ref{prop:gform} that
$r_2(x_{T \cap D_2} \,|\, x_{\pa(D_2) \setminus D_2}) = 0$; so
clearly whatever the value of any other factor, including $r_1$, 
the product will always be zero.

%
%
%

\medskip

Now suppose $\G$ has a single district $V$; it follows from the 
definitions that $V$ is intrinsic with recursive head 
$H^* = \sterile_\G(V)$ and tail $T^* = (V \cup W) \setminus H^*$.  
For any vertex $h \in H^*$ the set $V \setminus \{h\}$ is
random-ancestral, so the margin $p_{V\setminus h|W}$ recursively
factorizes with respect to $\G[V \setminus \{h\}]$, and therefore (by the induction
hypothesis) is also parameterized according to $\G[V \setminus \{h\}]$.  Every recursive head
$H$ other than $H^*$ is found in at least one random-ancestral margin
$V \setminus \{h\}$ of $\G$, so applying the induction hypothesis to
$\G[V \setminus \{h\}]$ we obtain either a well defined parameter, or
determine that its value is irrelevant.

If two or more random-ancestral margins contain the recursive head $H$, note
that by Corollary \ref{cor:anccap} there is a `smallest' such margin
$p_{\an(H) \setminus W|W}$ containing $H$; all other random-ancestral
margins must be consistent with this margin, and therefore by the induction
hypothesis they will agree either on a value for $q_H(x_T)$ or agree
that it is arbitrary.  So for every random-ancestral set $A \subsetneq V$ the 
margin $\G[A]$ is parameterized according to $p_{A|W}$ and any 
parameters that two or more of these margins jointly use either are consistent, 
or can be chosen to be consistent.
 
The only recursive head not found in a random-ancestral margin is $H^*$, so the
only parameter yet to be defined is $q_{H^*}(x_{T^*})$.  We define this
to be any version of $p_{H^*|T^*}(0_{H^*} \,|\, x_{T^*})$; this is
well defined if $p(x_{T^* \setminus W} \,|\, x_{T^* \cap W}) > 0$, and
arbitrary otherwise.  Then
\begin{align*}
p_{V|W}(0_{H^*}, x_{V \setminus H^*} \,|\, x_W) &= q_{H^*}(x_{T^*}) \cdot p(x_{V \setminus H^*} \,|\, x_W).
\end{align*}
Since $V \setminus H^*$ is a random-ancestral margin of $\G$, 
it follows that $p(x_{V \setminus H^*} \,|\, x_W)$
is parameterized according to $\G[V \setminus H^*]$, and so
\begin{align*}
p_{V|W}(0_{H^*}, x_{V \setminus H^*} \,|\, x_W) &= p(0_{H^*} \,|\, x_{V \setminus H^*}, x_W) \cdot \prod_{O \subseteq C \subseteq V \setminus H^*} (-1)^{|C \setminus O|} \prod_{H \in \partn{C}_{\G}} q_H(x_T)\\
&= \prod_{O \subseteq C \subseteq V} (-1)^{|C \setminus O|} \prod_{H \in \partn{C}_{\G}} q_H(x_T).
\end{align*}
This gives the required result if $x_{h} = 0$ for all $h \in H^*$.  
On the other hand, if $x_h = 1_h$ for some $h \in H^*$ then using
a second induction on the number of zeros in $x_{H^*}$ we have
\begin{align*}
p(x_{V\setminus h}, 1_h \,|\, x_W) &= p(x_{V\setminus h} \,|\, x_W) - p(x_{V\setminus h}, 0_h \,|\, x_W)\\
&= \sum_{O\subseteq C \subseteq V \setminus \{h\}} \!\!\!\! (-1)^{|C \setminus O|} \!\! \prod_{H \in \partn{C}_{\G}} q_H(x_T)
 - \sum_{O \cup \{h\} \subseteq C \subseteq V} \!\!\!\! (-1)^{|C \setminus (O \cup \{h\})|} \!\! \prod_{H \in \partn{C}_\G} q_H(x_T) \\
&= \sum_{O \subseteq C \subseteq V} (-1)^{|C \setminus O|} \prod_{H \in \partn{C}_\G} q_H(x_T)
\end{align*}
using Lemma \ref{lem:marfact}.  
Hence every probability $p_{V|W}(x_V \,|\, x_W)$ is of the required form.
\end{proof}

\subsection{Model Smoothness} \label{sec:smooth}

For some ADMGs $\G$, the parameters $q_H(x_T)$
are just (versions of) the ordinary conditional probabilities
$P(X_H = 0 \,|\, X_T = x_T)$, and hence the alternating sum is 
similar to the M\"obius form of the parameterization studied in
\citet{evans:14} in the context of `ordinary' Markov models.  
However we have already seen that not all of the
parameters can be interpreted this way; recall the example in Section
\ref{sec:nested} for Figure \ref{fig:verma}(b).  In this case, as
noted in Example \ref{exm:order},
$q_{EY}(x, m) = r_{EY}(0,0\,|\, x,m)$ is not an ordinary conditional
probability, but if the graph is interpreted causally then it is the
conditional probability of $\{E = Y = 0\}$ after intervening to fix
$\{X = x, M = m\}$: 
\begin{align*}
  q_{EY}(x, m) &= p_{E|X}(0 \,|\, x) \cdot p_{Y|XME}(0 \,|\, x, m, 0)\\
               &= P(Y=E=0 \,|\, \Do(X=x, M=m)).  
\end{align*}
By the requirement that the graph is `interpreted causally' we mean
that it is the latent projection of a causal DAG in the sense of
\citet[][Definition 1.3.1]{pearl:09}.  This result holds more
generally.


\begin{thm} \label{thm:recover} If $p_{V|W}$ is strictly positive and
  recursively factorizes according to some CADMG $\G$, then all the
  parameters $q_H(x_T)$ are unique and can be smoothly recovered from
  $p_{V|W}$ (i.e.\ there is an infinitely differentiable function 
  from $p_{V|W}$ to the $q_H(x_T)$).

  In addition, if the graph is interpreted causally then 
\begin{align*}
q_H(x_T) = P(X_H = 0_H \,|\, \Do(X_T = x_T)).
\end{align*}
\end{thm}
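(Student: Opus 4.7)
The plan is to handle the two conclusions in turn, by leveraging the explicit construction of $q_H(x_T)$ from the $(\Rightarrow)$ direction of the proof of Theorem~\ref{thm:main}, together with Proposition~\ref{prop:gform} and the Corollary to Lemma~\ref{lem:pos}.

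First I would address uniqueness and smooth recoverability. Tracing the inductive construction in that proof, for each recursive head $H$ with intrinsic set $S = I_\G(H)$ and tail $T = \pa_\G(S)$, the parameter $q_H(x_T)$ is produced by: (i) reaching $\G[S]$ via a sequence of $\fm$ and $\fd$ operations applied to $\G$, which yields a kernel $r_S(x_S \,|\, x_{T \setminus S})$; and (ii) setting $q_H(x_T) = r_S(0_H \,|\, x_{S \setminus H}, x_{T \setminus S})$, which is well-defined since $T \cap S = S \setminus H$. The Corollary to Lemma~\ref{lem:pos} already guarantees that every kernel along this chain is uniquely determined when $p_{V|W}$ is strictly positive. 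Proposition~\ref{prop:gform} then expresses $r_S$ as a product of ordinary conditional distributions drawn from $p_{V|W}$, each a smooth function of the strictly positive joint, and Lemma~\ref{lem:pos}(b) ensures that the denominator $r_S(x_{S \setminus H} \,|\, x_{T \setminus S})$ is strictly positive, so the ratio defining $q_H(x_T)$ is smooth in $p_{V|W}$.

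Next I would turn to the causal interpretation, assuming that $\G$ arises as the latent projection of a causal DAG $\G^+$ in the sense of \citet{pearl:09}. I would argue by induction on the depth of the recursion that the kernel $r_S$ constructed above equals the post-intervention distribution $P(X_S = x_S \,|\, \Do(X_{V \setminus S} = x_{V \setminus S}))$, which depends on $x_{V \setminus S}$ only through $x_{T \setminus S}$. The g-formula representation in Proposition~\ref{prop:gform} makes this almost immediate: the product of conditionals appearing there is precisely the c-component factorization of \citet{tian:02a}, which identifies the interventional distribution of a district in a latent-variable DAG. The two operations $\fm$ and $\fd$ correspond respectively to marginalizing out non-ancestors and restricting to an identifiable c-component, each a valid manipulation of interventional distributions, so the induction carries through.

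The hardest step will be converting $r_S(0_H \,|\, x_{S \setminus H}, x_{T \setminus S})$ into $P(X_H = 0_H \,|\, \Do(X_T = x_T))$. Here I would use the fact that $H = \sterile_\G(S)$ has no descendants in $\G[S]$ other than itself, so every vertex in $S \setminus H$ is a non-descendant of $H$ in $\G[S]$. After the latent variables have been marginalized (which is what the kernel $r_S$ already achieves), conditioning on these non-descendants inside the post-intervention distribution for $\Do(X_{V \setminus S})$ is equivalent, by Pearl's do-calculus, to additionally intervening on them; the combined intervention is precisely $\Do(X_T = x_T)$. This conditioning-is-intervention step is the main obstacle: it requires a careful do-calculus argument on the latent-projection DAG, and although the conclusion is intuitive from the graphical structure of recursive heads being sterile inside their intrinsic closures, making it rigorous is the most technical part of the proof; the remaining pieces then follow routinely from ingredients already assembled in the paper.
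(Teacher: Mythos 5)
Your first paragraph reproduces, in more detail, exactly the argument the paper gives for the first claim: the ($\Rightarrow$) direction of the proof of Theorem~\ref{thm:main} constructs each $q_H(x_T)$ by marginalizations and divisions, and Lemma~\ref{lem:pos} guarantees every denominator is positive when $p_{V|W}$ is strictly positive, so the parameters are unique and smooth in $p_{V|W}$. That part is correct and is the paper's route. For the causal claim the paper simply cites the identification algorithm of Tian and Pearl; your induction showing that the district kernel $r_S$ equals $P(x_S \mid \Do(x_{\pa_\G(S)\setminus S}))$ is a sound sketch of what that citation delivers.

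The genuine problem is the step you yourself flag as "the main obstacle": it is not merely hard to make rigorous, it is false. You propose to convert $r_S(0_H \mid x_{S\setminus H}, x_{T\setminus S})$ into $P(X_H=0_H\mid\Do(X_T=x_T))$ by arguing that conditioning on the non-descendants $S\setminus H$ inside the interventional kernel can be replaced by intervening on them. Rule~2 of the do-calculus requires the absence of back-door paths from $S\setminus H$ to $H$ after cutting the outgoing edges of $S\setminus H$; but $S$ is by construction bidirected-connected, so the vertices of $S\setminus H$ are confounded with $H$ through precisely the latent variables the projection hides, and the rule does not apply. Concretely, in the graph of Figure~\ref{fig:iv}(a) ($Z\rightarrow X\rightarrow Y$ with $X\leftrightarrow Y$) the head $\{Y\}$ has intrinsic set $\{X,Y\}$ and tail $\{Z,X\}$, and the parameter is $q_Y(z,x)=p(y=0\mid x,z)$. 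This equals $P(Y=0\mid X=x,\,\Do(Z=z))$ but not $P(Y=0\mid\Do(X=x,Z=z))$; the latter is not even identified in that model. The correct causal reading of $q_H(x_T)$ — and the one Tian's algorithm actually produces — is the conditional probability of $\{X_H=0_H\}$ given $X_{T\cap S}=x_{T\cap S}$ \emph{within} the kernel $P(x_S\mid\Do(x_{T\setminus S}))$; this coincides with intervening on all of $T$ only when $H=S$ (as in the Verma example, where $T\cap S=\emptyset$). So the displayed identity must be understood in that sense, and your final conversion step should be stated as conditioning inside the interventional kernel rather than as a further intervention; as written, the step would fail for every head that is a proper subset of its intrinsic set.
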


\begin{proof}
  The first claim follows directly from the proof of Theorem
  \ref{thm:main}, since the operations involved are just summations
  and divisions by positive quantities; 
  the fact that $p_{V|W}$ is strictly positive ensures
  that each parameter is always uniquely defined rather than being
  arbitrary.

  The second part follows from the algorithm in \citet{tian:02}.
\end{proof}

We remark that if the distribution is not strictly positive then it follows from 
the `$\Rightarrow$' part of the proof of Theorem \ref{thm:main} that the
parameters $q_H(x_T)$ are uniquely defined if and only if 
$p(x_{V \cap T} \,|\, x_{W \cap T}, y_{W \setminus T}) > 0$ for some
$y_{W \setminus T}$.  In the case that $W = \emptyset$ and $\G$ 
is an ADMG, this reduces to $q_H(x_T)$ being uniquely defined 
if and only if $p(x_T) > 0$.

We now return to the generality of a finite discrete state-space $\X_v$ for 
each $X_v$.  Let $\tilde\X_v$ be the same set with some arbitrary
entry removed (so that $|\tilde\X_v| = |\X_v|-1$).  Then for any set
$C$ let $\tilde\X_C \equiv \times_{v\in C}\, \tilde\X_v$.

\begin{cor}
The set of strictly positive distributions obeying the recursive factorization 
property with respect to a CADMG $\G$ is a curved exponential family of 
dimension
\begin{align*}
d(\G) = \sum_{H \in \mathcal{H}(\G)} |\tilde\X_H| \cdot |\X_T|.
\end{align*}
\end{cor}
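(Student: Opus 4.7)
The plan is to deduce the corollary directly from Theorems \ref{thm:main} and \ref{thm:recover}, combined with an explicit parameter count for the general discrete case. Recall that a curved exponential family is, by definition, a smooth embedded submanifold of a regular exponential family; the ambient exponential family here is the set of strictly positive kernels $p_{V|W}$ on the finite product $\X_V \times \X_W$, so it suffices to realise the strictly positive part of $\RF(\G)$ as a smooth embedded submanifold of the correct dimension.

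First I would invoke the extension of (\ref{eqn:param}) to general discrete state-spaces described in Appendix \ref{sec:disc}: each recursive head $H \in \mathcal{H}(\G)$ contributes one real parameter $q_H(x_H, x_T)$ for each $x_H \in \tilde\X_H$ and each $x_T \in \X_T$, giving a parameter vector $\bs{q}$ of total length $d(\G) = \sum_{H} |\tilde\X_H|\cdot|\X_T|$. By the discrete-state analogue of Theorem \ref{thm:main}, the polynomial map $\Phi:\bs{q} \mapsto p_{V|W}$ has image contained in $\RF(\G)$, and every $p_{V|W} \in \RF(\G)$ lies in its image. Let $Q \subseteq \reals^{d(\G)}$ denote the preimage under $\Phi$ of the strictly positive kernels; $Q$ is open in $\reals^{d(\G)}$, because positivity is an open condition and $\Phi$ is continuous.

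Next I would use Theorem \ref{thm:recover} to obtain a smooth map $\Psi$ that recovers $\bs{q}$ from any strictly positive $p_{V|W} \in \RF(\G)$, so that $\Psi \circ \Phi$ is the identity on $Q$. Since $\Phi$ is polynomial (hence smooth) and $\Psi$ is smooth, and the two are mutually inverse on their relevant domains, the differential $d\Phi_{\bs{q}}$ has full rank $d(\G)$ at every point of $Q$. Consequently $\Phi(Q)$ is a smooth embedded submanifold of the simplex of positive kernels of dimension exactly $d(\G)$, and is therefore a curved exponential family of that dimension.

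The main obstacle is bookkeeping rather than any deep difficulty: one needs to verify carefully that Appendix \ref{sec:disc} genuinely supplies $|\tilde\X_H|\cdot|\X_T|$ algebraically independent parameters per head, with no hidden linear relations among them, and that $Q$ really is diffeomorphic onto its image in the positive simplex. Both of these amount to unpacking the mutual-inverse property of $\Phi$ and $\Psi$ that Theorems \ref{thm:main} and \ref{thm:recover} have already essentially established.
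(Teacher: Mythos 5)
Your proposal is correct and follows essentially the same route as the paper: smoothness of the parameter-to-probability map is read off from the polynomial form of (\ref{eqn:param}), smoothness of the inverse comes from Theorem \ref{thm:recover}, and the mutual-inverse property yields an embedded submanifold of dimension $d(\G)$ --- the paper simply delegates this last manifold-bookkeeping step to Theorem 6.5 of \citet{evans:14} rather than spelling it out. The parameter count per head, $|\tilde\X_H|\cdot|\X_T|$, matches Appendix \ref{sec:disc} exactly, so no gap remains.
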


\begin{proof}
  Theorem \ref{thm:recover} shows that there is a smooth (infinitely
  differentiable) map from positive distributions obeying the
  recursive factorization to the model parameters; it is clear from
  the form of the parameterization that the map from parameters to the
  probabilities is also smooth.  The result follows by the same
  argument as Theorem 6.5 of \citet{evans:14}.
\end{proof}

This result allows us to invoke standard statistical theory within
this class of models.  For example, if $\G'$ is a subgraph of $\G$,
then we can perform a hypothesis test of $H_0: p_{V|W} \in \RF(\G')$
versus $H_1: p_{V|W} \in \RF(\G)$ by comparing the likelihood ratio
statistic to a $\chi^2_k$ distribution, where $k = d(\G) - d(\G')$. 

Fitting these models is relatively straightforward given the explicit
maps between parameters and probabilities, and maximum likelihood 
estimation can be performed using the same method as in \citet{evans:10}. 
The parameters $q_H(x_T)$ are clearly variation dependent, which 
can cause algorithmic complications and interpretability problems.  
A log-linear parameterization of the kind given in \citet{evans:13} 
can relatively easily be adapted to nested models; see also 
\citet{shpitser:13}.

\section{Examples} \label{sec:exm}

The Wisconsin Longitudinal Study \citep{wls} is a panel study of over 10,000
people who graduated from Wisconsin High Schools in 1957.  We consider
males who, when asked in 1975, had either been drafted or had not
served in the military at all; after removing missing data this left
1,676 respondents.  We wish to know whether, after controlling for
family income and education, being drafted had a significant effect on
future earnings.

The variables measured were:
\begin{itemize}
\item $X$, an indicator of whether family income in 1957 was above
  \$5k;
\item $Y$, an indicator of whether the respondent's income in 1992 was above \$37k;
\item $M$, an indicator of whether the respondent was drafted into the military;
\item $E$, an indicator of whether the respondent had education beyond high school.
\end{itemize}

Dichotomizations for $X$, $Y$ and $E$ were chosen to be close to the
median values of the original variables.  The data are shown in Table
\ref{tab:wls}; in each case the value 1 corresponds to the statement
above being true, 0 otherwise.  One possible model is that future
income is unrelated to family income at the time of graduation after
controlling for military service and level of education.  This
suggests the graph in Figure \ref{fig:mil1}(a), where the directed
edge from $X$ to $Y$ is not present.  We can fit this model using the
parameterization and an algorithm based on the one given by
\citet{evans:10}; the resulting fit has a deviance of 31.3 on 2
degrees of freedom, strongly suggesting that the model should be
rejected.  Unsurprisingly, the graph in Figure \ref{fig:verma}(b) is
also rejected for these data.

\begin{figure}
\begin{center}
\begin{tikzpicture}
[>=stealth, node distance=15mm]
\pgfsetarrows{latex-latex};
\begin{scope}
 \node[rv] (1) {$X$};
 \node[rv, right of=1] (2) {$E$};
 \node[rv, right of=2] (3) {$M$};
 \node[rv, right of=3] (4) {$Y$};
 \draw[da] (1) -- (2);
 \draw[da] (2) -- (3);
 \draw[da] (3) -- (4);
\draw[ba] (2.40) .. controls +(40:1) and +(140:1) .. (4.140);
\draw[da] (1.320) .. controls +(320:1) and +(220:1) .. (3.220);
\node[below of=2, xshift=7.5mm, yshift=2mm] {(a)};
\end{scope}
\begin{scope}[xshift=7cm]
 \node[rv] (1) {$X$};
 \node[rv, right of=1] (2) {$E$};
 \node[rv, right of=2] (3) {$M$};
 \node[rv, right of=3] (4) {$Y$};
 \draw[da] (1) -- (2);
 \draw[da] (2) -- (3);
\draw[ba] (2.40) .. controls +(40:1) and +(140:1) .. (4.140);
\draw[da] (1.320) .. controls +(320:1) and +(220:1) .. (4.220);
\node[below of=2, xshift=7.5mm, yshift=2mm] {(b)};
\end{scope}
\end{tikzpicture}
\caption{Two models for the Wisconsin miltary service data. (a) A
  proposed but rejected model; (b) a well-fitting model.  See text for
  discussion.}
\label{fig:mil1}
\end{center}
\end{figure}
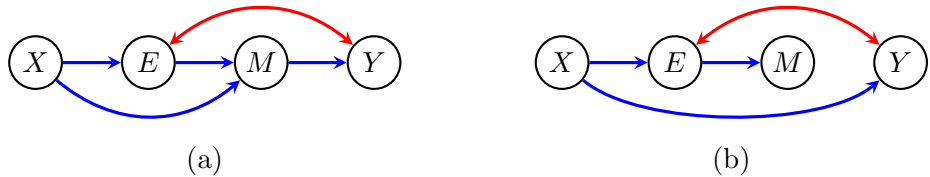

On the other hand the model shown in Figure \ref{fig:mil1}(b) has a
deviance of 5.57 on 6 degrees of freedom, which indicates a good fit.
Note that this implies that there is no evidence of a significant
effect of being drafted on future income, even though marginally there
is a strong negative correlation.  Models obtained by removing any
additional edges are strongly rejected.  Under this model the
probability of having a high income in 1992 is estimated as 0.50
(standard error 0.018) if the family had high income, and 0.36 (0.016)
if not.

In other words, we estimate
\begin{align*}
P(Y=1 \,|\, \Do(X=1)) &= 0.50 &
P(Y=1 \,|\, \Do(X=0)) &= 0.36,
\end{align*}
indicating a strong causal effect.

\begin{table}
\begin{center}
\begin{tabular}{|c|cc|c|c|cc|}
\multicolumn{3}{c}{$X=0,E=0$}&\multicolumn{1}{c}{}&\multicolumn{3}{c}{$X=1,E=0$}\\
\cline{1-3}\cline{5-7}
$M \backslash Y$&0&1&&$M \backslash Y$&0&1\\
\cline{1-3}\cline{5-7}
0&241&162&&0&161&148\\
1&53&39&&1&33&29\\
\cline{1-3}\cline{5-7}
\multicolumn{7}{c}{}\\
\multicolumn{3}{c}{$X=0,E=1$}&\multicolumn{1}{c}{}&\multicolumn{3}{c}{$X=1,E=1$}\\
\cline{1-3}\cline{5-7}
$M \backslash Y$&0&1&&$M \backslash Y$&0&1\\
\cline{1-3}\cline{5-7}
0&82&176&&0&113&364\\
1&13&16&&1&16&30\\
\cline{1-3}\cline{5-7}
\end{tabular}
\end{center}
\caption{Data from the Wisconsin Longitudinal Study.}
\label{tab:wls}
\end{table}

\subsubsection*{Acknowledgements}

This research uses data from the Wisconsin Longitudinal Study (WLS) of
the University of Wisconsin-Madison, which is supported principally by
the National Institute on Aging.  This research was supported by a SQuaRE 
grant from the American Institute of Mathematics. 
We thank two anonymous reviewers and
an associate editor for suggesting several improvements to the paper.

\bibliographystyle{plainnat}
\bibliography{mybib}

\appendix

\section{Proof of the Verma Constraint} \label{sec:dorm}

Note that 
  \begin{align*}
\sum_{e} p(e \,|\, x) \cdot p(y \,|\, x, m, e) &= \sum_{e} \frac{p(x, m, e, y)}{p(x) \cdot p(m \mid x, e)}\\
&= \sum_{e} \frac{\sum_u p(u, x, m, e, y)}{p(x) \cdot p(m \mid x, e)}
\intertext{by elementary laws of conditional probability.  Applying the 
usual factorization of the DAG in Figure \ref{fig:verma}(a), we obtain}
&= \sum_{e} \frac{\sum_u p(u) \cdot p(x) \cdot p(e \mid x, u) \cdot p(m \mid e) \cdot p(y \mid m, u)}{p(x) \cdot p(m \mid x, e)}\\
\intertext{noting that $M \indep X \mid E$, and cancelling, gives}
&= \sum_{e, u} p(u)  \cdot p(e \mid x, u) \cdot p(y \mid m, u)\\
&= \sum_{u} p(u)  \cdot p(y \mid m, u),
\end{align*}
which does not depend upon $x$.

\section{Partitions}

Let $V$ be an arbitrary finite set, and let $\mathcal{H}$ be an
arbitrary collection of non-empty subsets of $V$, with the restriction
that $\{v\} \in \mathcal{H}$ for all $v \in V$ (i.e.\ all singletons
are in $\mathcal{H}$). A partial ordering $\prec$ on the elements of
$\mathcal{H}$ will be said to be {\em partition suitable} if for any
$H_1, H_2 \in \mathcal{H}$ with $H_1 \cap H_2 \neq \emptyset$, there
exists $H^* \in \mathcal{H}$ such that $H^* \subseteq H_1 \cup H_2$
and $H_i \preceq H^*$ for each $i=1,2$.  (Here $H_1 \preceq H_2$ means
$H_1 \prec H_2$ or $H_1 = H_2$.)  

Define a function $\Phi$ on subsets of $V$ such that $\Phi(W)$ `picks
out' the set of $\prec$-maximal elements of $\mathcal{H}$ that are
subsets of $W$.  That is,
\begin{align*}
\Phi(W) \equiv \{H \in \mathcal{H} \,|\, H \subseteq W \text{ and } H \nprec H' \text{ for all other } H' \subseteq W\}.
\end{align*}
Define
\begin{align*}
\psi(W) \equiv W \setminus \bigcup_{C \in \Phi(W)} C.
\end{align*}
Now recursively define a function $[\cdot]$ that partitions subsets of 
$V$: define $[\emptyset] = \emptyset$, and 
\begin{align*}
[W] \equiv \Phi(W) \cup [\psi(W)].
\end{align*}
It is clear that $\cup_{A \in [W]} A = W$.  

The next proposition shows that $[W]$ is indeed a partition of $W$.

\begin{prop} \label{prop:disjoint}
If $H_1, H_2 \in \Phi(W)$ with $H_1 \neq H_2$ then $H_1 \cap H_2 =\emptyset$.
\end{prop}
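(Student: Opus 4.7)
The plan is to argue by contradiction, exploiting the partition-suitability condition on $\prec$ to produce an element of $\mathcal{H}$ that violates the $\prec$-maximality built into the definition of $\Phi(W)$.

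First I would suppose, for contradiction, that there exist distinct $H_1, H_2 \in \Phi(W)$ with $H_1 \cap H_2 \neq \emptyset$. Since $H_1, H_2 \in \mathcal{H}$ and their intersection is non-empty, the partition-suitability hypothesis supplies some $H^* \in \mathcal{H}$ with $H^* \subseteq H_1 \cup H_2$ and $H_i \preceq H^*$ for $i = 1, 2$. Because $H_1, H_2 \subseteq W$, we also get $H^* \subseteq W$, so $H^*$ is a candidate element of $\Phi(W)$ in the sense that $H^* \in \mathcal{H}$ and $H^* \subseteq W$.

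Next I would argue that $H^*$ cannot coincide with both $H_1$ and $H_2$ simultaneously (they are distinct), so at least one of the relations $H_i \preceq H^*$ is strict. Without loss of generality suppose $H_1 \prec H^*$. Then $H^*$ witnesses that $H_1$ fails to be $\prec$-maximal among subsets of $W$ in $\mathcal{H}$, contradicting $H_1 \in \Phi(W)$. The only remaining possibility is that $H^* = H_1$, in which case $H_2 \preceq H_1$; but then either $H_2 \prec H_1$ (contradicting maximality of $H_2$) or $H_2 = H_1$ (contradicting distinctness). Hence $H_1 \cap H_2 = \emptyset$.

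The proof is essentially a one-step unwinding of definitions, so there is no real obstacle; the only subtlety is to verify carefully that the case $H^* \in \{H_1, H_2\}$ also produces a contradiction via $H_1 \neq H_2$, rather than hoping that strict inequality is automatic. Once that bookkeeping is done, the argument is complete and $\Phi(W)$ is an antichain of disjoint sets, which together with the straightforward identity $\bigcup_{A \in \Phi(W)} A \cup \psi(W) = W$ and an induction on $|W|$ will yield the partition claim needed in the body of the paper.
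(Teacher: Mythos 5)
Your proof is correct and takes essentially the same approach as the paper: both invoke partition suitability to obtain $H^* \subseteq H_1 \cup H_2 \subseteq W$ with $H_i \preceq H^*$, and observe that since $H_1 \neq H_2$ at least one of these relations must be strict, contradicting $\prec$-maximality in $\Phi(W)$. Your explicit handling of the case $H^* = H_1$ is just a more careful spelling-out of the paper's remark that ``in particular $H^* \succ H_i$ for at least one of $i=1,2$.''
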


\begin{proof}
  Suppose $H_1 \cap H_2 \neq \emptyset$.  Then by partition
  suitability, there exists $H^* \subseteq H_1 \cup H_2$ with $H^*
  \succeq H_1, H_2$, and in particular $H^* \succ H_i$ for at least
  one of $i=1,2$.  Hence at least one of the $H_i$ is not maximal in
  $W$.
\end{proof}

\begin{prop} \label{prop:stillmax}
  If $A \subseteq W_1 \subseteq W_2$, and $A \in \Phi(W_2)$ then $A
  \in \Phi(W_1)$.
\end{prop}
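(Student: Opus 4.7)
The plan is to argue by contradiction, exploiting the fact that shrinking the ambient set can only remove potential `witnesses' to non-maximality, never create new ones. Concretely, suppose for contradiction that $A \notin \Phi(W_1)$. Since $A \in \mathcal{H}$ and $A \subseteq W_1$ by hypothesis, the only way $A$ can fail to be in $\Phi(W_1)$ is that there exists some $H' \in \mathcal{H}$ with $H' \subseteq W_1$, $H' \neq A$, and $A \prec H'$.

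The next step is simply to observe that $H' \subseteq W_1 \subseteq W_2$, so $H'$ is also a candidate subset of $W_2$. Together with $A \prec H'$ and $H' \neq A$, this contradicts the fact that $A$ is $\prec$-maximal among subsets of $W_2$ in $\mathcal{H}$, i.e.\ contradicts $A \in \Phi(W_2)$. Hence no such $H'$ can exist, and $A \in \Phi(W_1)$.

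There is essentially no obstacle here: the statement is really a monotonicity property of maximality under set inclusion, and the proof consists of a one-line contradiction. Nothing about partition-suitability of $\prec$ needs to be invoked; only the definitional form of $\Phi$ is used.
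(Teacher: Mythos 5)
Your proof is correct and is essentially the contrapositive phrasing of the paper's own one-line argument: the set of potential witnesses $H' \subseteq W_1$ to non-maximality is contained in the set of such witnesses for $W_2$, so maximality in $W_2$ implies maximality in $W_1$. No further comment is needed.
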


\begin{proof}
  If $A$ is maximal amongst those recursive heads that are subsets of $W_2$,
  then it is certainly still maximal amongst those that are subsets
  of $W_1$, since there are fewer such recursive heads.
\end{proof}



\begin{prop} \label{prop:rmv}
If $C \in [W]$, then $[W] = \{C\} \cup [W \setminus C]$.
\end{prop}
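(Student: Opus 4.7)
The plan is to induct on $|W|$, with the base case $|W|=0$ being vacuous, and to split the inductive step into two cases depending on whether $C$ sits in $\Phi(W)$ or only in $[\psi(W)]$.

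The easier case is $C \in [\psi(W)]$. Since $\Phi(W) \neq \emptyset$ whenever $W \neq \emptyset$ (as all singletons lie in $\mathcal{H}$), we have $|\psi(W)| < |W|$, and the inductive hypothesis applied to $\psi(W)$ gives $[\psi(W)] = \{C\} \cup [\psi(W) \setminus C]$. The key claim is then that $\Phi(W \setminus C) = \Phi(W)$. The inclusion $\Phi(W) \subseteq \Phi(W \setminus C)$ is immediate: every $D \in \Phi(W)$ is disjoint from $C \subseteq \psi(W)$ by construction of $\psi$, so $D \subseteq W \setminus C$ and remains maximal. For the reverse, any $H \in \Phi(W\setminus C) \setminus \Phi(W)$ would fail to be maximal in $W$, so following $H \prec H' \preceq D$ for some $D \in \Phi(W)$ (chosen maximal by finiteness) would give $D \subseteq W \setminus C$ by the same disjointness, and $H \prec D$ with $D \subseteq W \setminus C$ contradicts maximality of $H$ in $W\setminus C$. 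Hence $\psi(W \setminus C) = \psi(W) \setminus C$, and unfolding the recursive definition yields $[W\setminus C] = \Phi(W) \cup [\psi(W) \setminus C] = [W] \setminus \{C\}$.

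The harder case is $C \in \Phi(W)$, where removing $C$ can promote heads that were previously hidden beneath it. Writing $\Phi(W) = \{C, D_1, \ldots, D_m\}$, I define $\Phi^* \equiv \Phi(W\setminus C) \setminus \{D_1, \ldots, D_m\}$ and claim $\Phi^* \subseteq \Phi(\psi(W))$: any $H \in \Phi^*$ is disjoint from every $D_i$ (by Proposition \ref{prop:disjoint} applied in $W \setminus C$), so $H \subseteq \psi(W)$, and any strict upper bound $H \prec H' \subseteq \psi(W) \subseteq W \setminus C$ would contradict maximality of $H$ in $W \setminus C$. Since $|\psi(W)| < |W|$, I then invoke the outer inductive hypothesis through a short sub-induction on $|\Phi^*|$ to pull the elements of $\Phi^*$ out of $[\psi(W)]$ one at a time, obtaining $[\psi(W) \setminus \bigcup \Phi^*] = [\psi(W)] \setminus \Phi^*$. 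Observing that $\psi(W \setminus C) = \psi(W) \setminus \bigcup \Phi^*$, this yields
\[
[W \setminus C] = \bigl(\{D_1, \ldots, D_m\} \cup \Phi^*\bigr) \cup \bigl([\psi(W)] \setminus \Phi^*\bigr) = (\Phi(W) \setminus \{C\}) \cup [\psi(W)] = [W] \setminus \{C\}.
\]

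The main obstacle is the bookkeeping in the second case: at each step of the sub-induction one must check that the element of $\Phi^*$ being removed is still present in the current running partition, and that the outer inductive hypothesis applies to a strictly smaller set. Both facts follow because $\Phi^* \subseteq [\psi(W)]$ is preserved under the iterated removals and because the ambient set remains contained in $\psi(W)$, which is strictly smaller than $W$. Once this structure is in place, the algebraic rearrangement above closes the argument.
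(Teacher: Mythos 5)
Your proof is correct and follows essentially the same route as the paper's: induct on $|W|$, split on whether $C \in \Phi(W)$ or $C \in [\psi(W)]$, use Proposition~\ref{prop:stillmax} to relate $\Phi(W\setminus C)$ to $\Phi(W)$, and push the newly promoted maximal heads into $\Phi(\psi(W))$ so the inductive hypothesis applies. The only difference is that you fill in details the paper leaves implicit (the proof that $\Phi(W)=\Phi(W\setminus C)$ in the non-maximal case, the identity $\psi(W\setminus C)=\psi(W)\setminus\bigcup\Phi^*$, and the bookkeeping in the iterated removals), all of which check out.
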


\begin{proof}
  We proceed by induction on the size of $W$.  If $[W] = \{C\}$,
  including any case in which $|W|=1$, the result is trivial.

If $C$ is not maximal with respect to $\prec$ in $W$, then $\Phi(W) = \Phi(W \setminus C)$, and so
\begin{align*}
[W] &= \Phi(W) \cup [\psi(W)]\\
  &= \Phi(W \setminus C) \cup [\psi(W)],
\end{align*}
and the problem reduces to showing that $[\psi(W)] = \{C\} \cup
[\psi(W \setminus C)]$, which follows from the induction hypothesis.
Thus, suppose $C \in \Phi(W)$.

Now by Proposition \ref{prop:stillmax}, $\Phi(W \setminus C) \cup
\{C\} \supseteq \Phi(W)$, and if equality holds we are done.
Otherwise let $C_1, \ldots, C_k$ be the sets in $\Phi(W \setminus C)$
but not in $\Phi(W)$.  These sets are maximal in $W \setminus C$, so
they are in $\Phi(\psi(W))$ by Proposition \ref{prop:stillmax}, since
by hypothesis, $\psi(W) \subseteq W\setminus C$.  Then the problem
reduces to showing that
\begin{align*} 
[\psi(W)] = \{C_1, \ldots, C_k\} \cup [\psi(W) \setminus
(C_1 \cup \cdots \cup C_k)], 
\end{align*} 
which follows from repeated
application of the induction hypothesis.
\end{proof}

\begin{prop} \label{prop:part} Let $D_1, \ldots, D_k$ be a partition
  of $V$, and suppose that each $H \in \mathcal{H}$ is contained
  within some $D_i$.  Let $\prec$ be a partition-suitable partial
  ordering.  Then
\begin{align*}
[W] = \bigcup_{i=1}^k [W \cap D_i].
\end{align*}
\end{prop}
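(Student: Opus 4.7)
The plan is to induct on $|W|$. The base case $W = \emptyset$ is immediate since $[\emptyset] = \emptyset$ and each $[\emptyset \cap D_i] = \emptyset$. For the inductive step, I would pick a single recursive head and peel it off both sides of the claimed identity, using Proposition \ref{prop:rmv} to reduce to a strictly smaller instance.

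Specifically, choose any $H \in \Phi(W)$; such an $H$ exists whenever $W \neq \emptyset$, because the singletons $\{v\}$ for $v \in W$ all lie in $\mathcal{H}$, so $\{H' \in \mathcal{H} : H' \subseteq W\}$ is a finite nonempty set and hence admits $\prec$-maximal elements. Let $D_i$ be the unique block of the partition that contains $H$. The crucial observation is that $H$ also lies in $\Phi(W \cap D_i)$: this is the easy direction of maximality transfer, because $W \cap D_i \subseteq W$, so any $H' \in \mathcal{H}$ with $H' \subseteq W \cap D_i$ also satisfies $H' \subseteq W$, whence $H \nprec H'$ follows from the maximality of $H$ in $W$.

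With $H \in [W]$ and $H \in [W \cap D_i]$ secured, Proposition \ref{prop:rmv} applied to each gives $[W] = \{H\} \cup [W \setminus H]$ and $[W \cap D_i] = \{H\} \cup [(W \cap D_i) \setminus H]$. Because $H \subseteq D_i$, we have $(W \setminus H) \cap D_j = W \cap D_j$ for $j \neq i$ and $(W \setminus H) \cap D_i = (W \cap D_i) \setminus H$. Applying the induction hypothesis to the strictly smaller set $W \setminus H$ then yields
\[
[W \setminus H] = \bigcup_{j=1}^{k} [(W \setminus H) \cap D_j] = [(W \cap D_i) \setminus H] \cup \bigcup_{j \neq i} [W \cap D_j].
\]
Substituting into the expression for $[W]$ and recombining $\{H\} \cup [(W \cap D_i) \setminus H] = [W \cap D_i]$ closes the induction.

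The main obstacle would appear to be that a partition-suitable $\prec$ could in principle relate recursive heads lying in different blocks $D_i$ and $D_j$ (partition suitability is silent when the heads are disjoint), which would prevent $\Phi(W)$ from splitting as $\bigcup_i \Phi(W \cap D_i)$. The plan sidesteps this difficulty by transferring maximality only in the easy direction (from the larger set $W$ down to $W \cap D_i$) rather than attempting to split $\Phi(W)$ directly; any cross-block $\prec$-dominations are then absorbed automatically into subsequent iterations of the peeling recursion.
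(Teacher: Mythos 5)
Your proof is correct and follows essentially the same route as the paper's: both are inductions on $|W|$ that rest on Proposition \ref{prop:rmv} together with the downward transfer of $\prec$-maximality from $W$ to $W \cap D_i$ (Proposition \ref{prop:stillmax}). The only difference is granularity --- you peel off a single maximal head $H$ and recurse on $W \setminus H$, whereas the paper peels off the entire layer $\Phi(W)$ at once and recurses on $\psi(W)$ --- and your closing remark about cross-block $\prec$-relations correctly identifies why only the downward direction of maximality transfer is needed.
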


\begin{proof}
We prove the case $k=2$, from which the general result follows by repeated applications.  If either $W \cap D_1$ or $W \cap D_2$ are empty, then the result is trivial.  By definitions
\begin{align*}
[W] = \Phi(W) \cup [\psi(W)];
\end{align*}
$\psi(W)$ is strictly smaller than $W$, so by the induction hypothesis
\begin{align*}
[W] = \Phi(W) \cup [\psi(W) \cap D_1] \cup [\psi(W) \cap D_2].
\end{align*}
By hypothesis $\Phi(W) = \mathcal{C}_1 \cup \mathcal{C}_2$ where each
$H \in \mathcal{C}_i$ is a subset of $D_i$; since the elements of
$\mathcal{C}_i$ are maximal with respect to $\prec$ in $W$, they are
also maximal in $W \cap D_i$.  Hence $\mathcal{C}_i \subseteq \Phi(W
\cap D_i)$, and then applying Proposition \ref{prop:rmv} gives
\begin{align*}
\mathcal{C}_i \cup [\psi(W) \cap D_i] 
&= [W \cap D_i],
\end{align*}
because $(\psi(W) \cap D_i) \cup \bigcup \mathcal{C}_i = W \cap D_i$.
Hence the result.
\end{proof}


\subsection{Partition Suitability of Recursive Head Ordering}

The next result, together with the previous one, shows that the
partition defined in Section \ref{sec:part} for recursive heads is
indeed a partition.

\begin{prop}\label{prop:rec-head-suitable}
$\prec$ is partition suitable for $\mathcal{H}(\G)$.  
\end{prop}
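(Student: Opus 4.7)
The plan is to take $H^*$ to be the recursive head of the intrinsic closure of $B := H_1 \cup H_2$. Set $S_i := I_\G(H_i)$ (the intrinsic set whose recursive head is $H_i$, by Lemma \ref{lem:intclos}), $S^* := I_\G(B)$, and $H^* := \sterile_\G(S^*)$. The proof then reduces to three claims: (i) $S_1, S_2 \subseteq S^*$; (ii) $S^*$ is bidirected-connected (so it is intrinsic and $H^* \in \mathcal{H}$); and (iii) $H^* \subseteq B$. Once these hold, Lemma \ref{lem:intclos} gives $I_\G(H^*) = S^*$, so $I_\G(H_i) = S_i \subseteq S^* = I_\G(H^*)$, i.e.\ $H_i \preceq H^*$, while (iii) yields $H^* \subseteq H_1 \cup H_2$, exactly as required.

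The main step is (i). I would first observe that within $S_i$, every non-head vertex has a child in $S_i$ (because $H_i = S_i \setminus \pa_\G(S_i)$); iterating along such children and using acyclicity, every vertex of $S_i$ reaches $H_i$ along a directed path lying entirely in $S_i$. Then I would induct on the steps of the iteration of $\fm_{\an(\cdot)}$ and $\fd_{\dis(\cdot)}$ that defines $S^*$ from $B$, showing that $S_i$ stays in the random vertex set of every intermediate graph $\G_t$. Whenever $S_i \subseteq V_t$, all directed and bidirected edges among vertices of $S_i$ are preserved in $\G_t = \G[V_t]$, so the internal ancestor paths in $S_i$ survive an $\fm$ step, giving $S_i \subseteq \an_{\G_t}(H_i) \subseteq \an_{\G_t}(B)$; and the bidirected-connectedness of $S_i$ together with $H_i \subseteq B$ survives an $\fd$ step, giving $S_i \subseteq \dis_{\G_t}(H_i) \subseteq \dis_{\G_t}(B)$. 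Passing to the stable point yields $S_i \subseteq S^*$.

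For (ii) and (iii), I use the stable-point identities $S^* = \an_{\G^*}(B) = \dis_{\G^*}(B)$, where $\G^* = \G[S^*]$. Pick $v \in H_1 \cap H_2$; then by (i), $v \in S_1 \cap S_2 \subseteq S^*$, and $S_1$, $S_2$ remain bidirected-connected in $\G^*$ (edges being preserved), so both lie in the single district of $v$ in $\G^*$, forcing $B$ into that one district. Hence $\dis_{\G^*}(B)$ is a single district and $S^*$ is bidirected-connected, proving (ii). For (iii), any $w \in S^* \setminus B$ is in $\an_{\G^*}(B)$, so the first edge of a directed path from $w$ to $B$ in $\G^* = \G[S^*]$ places $w$ in $\pa_\G(S^*) \cap S^*$; hence $w \notin H^*$. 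This last step is essentially the argument in the proof of Proposition \ref{prop:conn}.

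The main obstacle is claim (i), the invariance of $S_i$ under the iteration defining $S^*$. It rests on combining two structural features of $S_i$ --- the existence of internal directed paths to $H_i$, and bidirected-connectedness --- and coupling these to the target set $B$ via $H_i \subseteq B$ so that both the $\fm$ and the $\fd$ operations leave $S_i$ intact at every step. Claims (ii) and (iii) then follow by routine bookkeeping around the stable-point characterization of $S^*$.
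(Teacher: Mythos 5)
Your proof is correct, and it reaches the same object as the paper's (the recursive head of an intrinsic closure containing both $S_1$ and $S_2$), but by a noticeably different route. The paper takes $S^*$ to be the intrinsic closure of $S_1\cup S_2$ rather than of $H_1\cup H_2$: since $S_1$ and $S_2$ are each bidirected-connected and meet (because $H_1\cap H_2\neq\emptyset$), their union is bidirected-connected, so the closure is immediately well defined, immediately contains $S_1$ and $S_2$ (giving $H_i\preceq H^*$ with no work), and Proposition \ref{prop:conn} applies verbatim to give $H^*\subseteq S_1\cup S_2$; one then refines to $H^*\subseteq H_1\cup H_2$ by noting that every vertex of $S_i\setminus H_i$ is a parent of some element of $S_i\subseteq S^*$ and hence not sterile in $S^*$. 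By starting the closure from $H_1\cup H_2$ instead, you make claim (iii) come for free but pay for it in claim (i), which is the genuinely nontrivial part of your argument: you must show that each $S_i$ survives every $\fm_{\an(\cdot)}$ and $\fd_{\dis(\cdot)}$ step of the iteration. Your invariance induction is sound --- the internal directed paths from any vertex of $S_i$ to $H_i\subseteq B$ (which exist because $H_i=\sterile_\G(S_i)$, so every non-head vertex of $S_i$ has a child in $S_i$) protect $S_i$ against marginalization, and bidirected-connectedness of $S_i$ together with $H_i\subseteq B$ protects it against the district step; edge preservation in reachable subgraphs $\G[V_t]$ (Lemma \ref{lem:welldef}) makes both arguments carry over to each intermediate graph. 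Your claim (ii) is also needed and correctly handled, since $H_1\cup H_2$ need not be bidirected-connected so the closure's connectedness is not automatic as it is for the paper's choice. The only omission relative to the paper's proof is the opening observation that $\mathcal{H}(\G)$ contains all singletons (Lemma \ref{lem:singlehead}), which the appendix requires of the collection $\mathcal{H}$ before partition suitability of $\prec$ is even discussed; this is a one-line citation, not a substantive gap. On balance the paper's choice of closure set is the more economical one, but your version is a complete and valid alternative.
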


\begin{proof}
  Lemma \ref{lem:singlehead} shows that $\mathcal{H}$ contains the
  singleton vertices.  Now suppose we have two recursive heads
  $H_1,H_2$ with $H_1 \cap H_2 \neq \emptyset$.  Let the associated
  intrinsic sets be $S_1, S_2$.  Since $S_1,S_2$ are bidirected
  connected sets and they share a common element, $S_1 \cup S_2$ is
  also bidirected-connected.  Let $S^*$ be the intrinsic closure of 
  $S_1 \cup S_2$, with recursive head $H^*$.  Then $S^*$ contains both $S_1$ and $S_2$, 
  and therefore $H^* \succeq H_1, H_2$. 
  
  By Proposition \ref{prop:conn}, $H^* = \sterile_\G(S^*) \subseteq S_1 \cup S_2$;
  By definition of a recursive head, any $v \in S_1$ is either in $H_1$ or is a 
  parent of some other element of $S_1$ (and the same for $S_2$).  
  Hence $H^* \subseteq H_1 \cup H_2$.
\end{proof}

\section{General Discrete State-space} \label{sec:disc}

Lemmas \ref{lem:distfact} and \ref{lem:marfact} and Theorem 
\ref{thm:main} are stated
and proved for binary variables to avoid cumbersome notation; 
here we provide some notes on how one would adapt them to the general
case.  

Suppose that $\X_{VW}$ is possibly non-binary.  For each $v\in V$ 
pick an arbitrary element $k_v \in \X_v$ to be a corner-point.
Let $\tilde\X_v \equiv \X_v \setminus \{k_v\}$ and   
$\tilde\X_{C} \equiv \times_{v \in C}\, \tilde\X_v$.
In the binary case
we took $k_v = 1$, so that $\tilde\X_v = \{0\}$ for each $v$.

The parameters then become $q_H(x_H \mid x_T)$ for $H \in \mathcal{H}(\G)$, 
$x_H \in \tilde{\X}_H$ and $x_T \in \X_T$.  
The parameterization in (\ref{eqn:param}) becomes:
\begin{align*}
p_{V|W}(x_V \,|\, x_W) = 
\sum_{O \subseteq C \subseteq V} (-1)^{|C \setminus O|} \sum_{y_C \in \tilde{\X}_C: y_{O} = x_O} \prod_{H \in \partn{C}_\G} q_H(y_H \mid x_T),
\end{align*}
where $O \equiv O(x_V) = \{v \mid x_v \in \tilde{\X}_v\}$.  
Note that, in the binary case, the inner sum only ever has one term.

Lemma \ref{lem:distfact} goes through as before by splitting the inner 
sum up as
\[
\sum_{y_C \in \tilde{\X}_C: y_{O} = x_O} = \sum_{y_{C_1} \in \tilde{\X}_{C_1}: y_{O_1} = x_{O_1}} \sum_{y_{C_2} \in \tilde{\X}_{C_2}: y_{O_2} = x_{O_2}}.
\]
The proof of Theorem \ref{thm:main} is also the same, except that
instead of $x_h = 0$ and $x_h = 1$ the important cases become 
$x_h \in \tilde\X_h$ and $x_h = k_h$.

\end{document}